\newtheorem{theorem}{Theorem}[section]
\newtheorem{lemma}[theorem]{Lemma}
\theoremstyle{remark}
\newtheorem{remark}[theorem]{Remark}
\numberwithin{equation}{section}
\DeclareMathOperator{\sgn}{sgn}
\begin{document}

\author[S. Ghosh]{Souvik Ghosh}
\address{Department of Statistics\\
Columbia University \\
New York, NY 10027.}
\email{ghosh@stat.columbia.edu}

\author[S. Resnick]{Sidney Resnick}
\address{School of Operations Research and Information Engineering,
  Rhodes Hall\\ Cornell University\\ Ithaca, NY 14853.}
\email{sir1@cornell.edu}

\date{\today}

\title[Mean Excess Plots]{A Discussion on Mean Excess Plots}  

\thanks{S. Ghosh was partially supported by the FRA program at Columbia University. S. Resnick was partially supported by ARO Contract W911NF-07-1-0078 at Cornell University.  The authors are grateful to the  referees the editors for their helpful and extensive comments.}

\begin{abstract} 

 A widely used  tool in the study of risk, insurance and extreme
 values  is the mean excess plot. One use is for  validating a
 generalized Pareto model for the excess distribution. This paper
  investigates some theoretical and practical aspects of the use of
  the mean
 excess plot. 
  \end{abstract}

\maketitle

\begin{section}{Introduction} \label{sec:intro}

The distribution of the excess  over a threshold $ u$ for a random variable $ X$ with distribution function $ F$ is defined as 
\begin{equation}\label{eq:excessdistn}
	F_{ u}(x)=P\big[  X-u\le x|X>u \big].
\end{equation}
This excess distribution is the foundation for peaks over threshold
(POT) modeling \citep{
embrechtskluppelbergmikosch:1997,
coles:2001}
which fits appropriate distributions to data of excesses. 
The use of peaks over threshold modeling is widespread and
applications include:
\begin{itemize} 
        \item Hydrology:  It is critical  to model the level of water
          in a river or sea to avoid flooding. The level $u$ could
          represent the height of a dam, levee or river bank. See
          \cite{todorovic1970smf} and \cite{todorovic1971spf}. 
        \item Actuarial science: 
Insurance companies set premium levels based on models
          for large losses. Excess of loss insurance pays for losses
          exceeding a contractually agreed amount. See
          \cite{hogg1984ld}, \cite{embrechtsmcneilfrey:2005}. 
        \item Survival analysis: The POT method is used for
          modeling lifetimes; see \cite{guess1985mrl}. 
        \item Environmental science: 
Public health agencies set standards for pollution
  levels. Exceedances of these standards generate public alerts or corrective measures; see \cite{smith1989eva}.
\end{itemize}

Peaks over threshold modeling is based on the 
generalized Pareto class of distributions  
being appropriate for describing statistical properties of excesses.
A random variable $ X$ has a generalized Pareto distribution (GPD) if it has a  cumulative distribution function of the form
\begin{equation}\label{eq:GPD}
	G_{ \xi ,\beta }(x)= \left\{   \begin{array}{ll}  1-(1+  \xi x/ \beta  )^{ -1/\xi }  &\mbox{ if }\xi \neq 0 \\ 1-\exp(-x/\beta ) & \mbox{ if } \xi =0 \end{array}  \right.
\end{equation}
where $ \beta >0$, and $ x\ge 0$ when $ \xi \ge 0$ and $ 0\le x\le -\beta/\xi $ if $ \xi <0$. The parameters $ \xi $ and $ \beta $ are referred to as the \emph{shape} and \emph{scale} parameters respectively. For a Pareto distribution, the tail index $ \alpha $ is just the reciprocal of $ \xi$ when $ \xi>0$. A special case is when $ \xi =0$ and in this case the GPD is the same as the exponential distribution with mean $ \beta $. 

The Pickands-Balkema-de Haan Theorem 
\cite[Theorem 7.20, page 277]{embrechtsmcneilfrey:2005}
provides the theoretical justification for the centrality of 
the GPD class of distributions for peaks over threshold modeling.
This result shows that for a large class of distributions (those
distributions in a maximal domain of attraction of the extreme value
laws), the excess distribution $F_u$ is asymptotically equivalent to a
GPD law $G_{\xi, \beta(u)}$, as the threshold $u$ appropaches the
right endpoint of the distribution $F$.  Here the asymptotic shape
parameter $\xi$ is fixed but the scale $\beta (u) $ may depend on $u$.
More precise statements are given below in Theorems
\ref{thm:char:pos:xi},
\ref{thm:char:neg:xi}, and
\ref{thm:char:zero:xi}.
 For this reason the GPD is a
natural candidate for modeling peaks over a threshold.

The choice of the extreme threshold $u$, where the GPD model provides
a suitable approximation to the excess distribution $F_u$ is critical
in applications. The {\it mean excess (ME)\/} function is a popular
tool used to aide this choice of $u$ and also to determine the
adequacy of the GPD model in practice. The ME function of a random
variable $X$ is defined as:
\begin{equation}\label{eq:meanexcess}
	M(u):= E\big[  X-u|X>u \big],
\end{equation}
provided $EX_+<\infty,$
and is also known as the {\it mean residual life function\/},
especially in
 survival analysis. It has been studied as early as
\cite{Benktander:1960}. See  \cite{hall1981mrl} for a
discussion of properties of mean excess functions. 
 Table 3.4.7 in \cite[p.161]{embrechtskluppelbergmikosch:1997} gives 
the mean excess function for some standard distributions. 

Given an independent and identically distributed (iid) sample $X_1,\dots,X_n$ from
$F(x)$, a natural estimate of  $M(u)$ 
is the empirical ME function
$ \hat M(u)$ defined as
\begin{equation}\label{eq:empiricalmeanexcess}
	\hat M(u)= \frac{ \sum_{ i=1}^{ n}(X_{ i}-u)I_{ [ X_{ i}>u]}}{ \sum_{ i=1}^{ n}I_{ [ X_{ i}>u]}}, \ \ \ \ u\ge 0.
\end{equation}
  \cite{yang1978ebf} suggested
 the use of the empirical ME function and
established the uniform strong consistency of 
$	\hat M(u)$ over compact $u$-sets; that is, for any $ b>0$
\begin{equation}\label{eqn:sid1}
 	P\Big[ \lim_{ n\to \infty} \sup_{ 0\le u\le b} \big| \hat M(u)-M(u) \big|=0  \Big]=1.    
\end{equation}

In the context of extremes, however, \eqref{eqn:sid1} is not
especially informative since what is of interest is the behavior of
$\hat M(u)$ in a neighborhood of the right end point of $F$, which
could be $\infty$. In this case the GPD plays a pivotal role. For a
random variable $ X\sim G_{ \xi ,\beta }$ , we have $E(X)<\infty$ iff
$ \xi <1$ and in this case, the ME function of $X$ is linear in $u$:
\begin{equation}\label{eq:megdp}
	M(u)= \frac{  \beta}{1-\xi  }+ \frac{ \xi}{1-\xi}u,
\end{equation}
where $ 0\le u< \infty $ if $ 0\le \xi<1$ and $ 0\le u\le -\beta /\xi$
if $ \xi<0$. 
 In fact, the linearity of the mean excess function
characterizes the GPD class.
See 
\cite{embrechtsmcneilfrey:2005, embrechtskluppelbergmikosch:1997}. 
   \cite{davison1990meo} used this
property to
devise a simple graphical check that  data conforms to
a GPD model; their method is based on the  ME plot which is the plot
of the points $ \{   (X_{ (k)},\hat M(X_{ (k)})):1< k\le n  \}$, where
$ X_{ (1)}\ge X_{ (2)}\ge \cdots\ge X_{ (n)}$ are the order statistics
of the data. If the ME plot  is close to linear for
high values of the threshold  then there is no evidence against use
of a GPD
model.  See also \cite{embrechtskluppelbergmikosch:1997} and
\cite{hogg1984ld} for the implementation of this plot in practice. 

In this paper we establish the asymptotic behavior of the ME plots for
large thresholds. We assume $F$ is in the maximal domain of attraction
of an extreme value law with shape parameter $\xi$. When $\xi <1, $
we show that, as expected, for high thresholds the ME plot viewed as a
random closed set converges in the Fell topology to a straight line.
A novel aspect of our study is we also
consider  the ME plot in the case $\xi>1$, the case where the ME function
does not exist, and show that the ME plot converges to a {\it random\/}
curve. This also holds in the more delicate case $\xi=1$ after suitable
rescaling. These results show that the ME plot is inconsistent when
$\xi \geq 1$ and emphasizes that knowledge of a finite mean is
required.

It is tempting to argue that consistency of the ME plot $\hat M(u)$
should imply, by a continuity argument, the consistency of the estimator of $\xi$
obtained from computing the slope of the line fit to the ME
plot. However, this slope functional is not necessarily continuous as
discussed in \cite{das2008qpr}. So consistency of the slope
function requires further work and is an ongoing investigation.

The paper is arranged as
follows. In Section 
\ref{sec:back} 
we  briefly discuss required background on convergence of random
closed sets
and then study the ME plot in Section \ref{sec:main}.
 In Section \ref{sec:exmethod}
we discuss advantages and disadvantages  of the mean excess plot
and how this tool 
compares with  {other techniques} of extreme value theory such as  the Hill
estimator, the Pickands estimator and the QQ plot. We illustrate  the
behavior of the empirical mean excess plot for some simulated data sets in
Section \ref{sec:simul} and in 
Section \ref{sec:data} we  analyze three
real data sets obtained from different subject areas  and 
also compare different tools.
\end{section}

\section{Background }\label{sec:back} 
\subsection{Topology on closed sets of $ \mathbb{R}^{
      2}$}\label{subsec:top}
Before we start any discussion on whether a mean excess plot is a reasonable diagnostic tool we need to understand what it means to talk about convergence of plots. So we discuss the topology on a set containing the plots.

We denote the collection of closed subsets of $ \mathbb{R}^{ 2}$ by $ \mathcal{F}$.  We consider a hit and miss topology on $ \mathcal{F}$ called the Fell topology. The Fell topology is generated by the families$ \{   \mathcal{F}^{ K}, K \mbox{ compact}  \}$ and $ \{   \mathcal{F}_{ G}, G\mbox{ open}  \}$ where for any set $ B$
\[ 
 	\mathcal{F}^{ B}= \{   F\in \mathcal{F}:F\cap B  =\emptyset \} \ \ \mbox{ and } \ \ \mathcal{F}_{ B}=\{   F\in \mathcal{F}:F\cap B \neq \emptyset  \} 
\]
So $ \mathcal{F}^{ B}$ and $ \mathcal{F}_{ B}$ are collections of closed sets which miss and hit the set $ B$, respectively. This is why such topologies are called hit and miss topologies.  In the Fell topology a sequence of closed sets $ \{   F_{ n}  \}$ converges to $ F\in \mathcal{F}$ if and only if the following two conditions hold:
\begin{itemize} 
        \item $ F$ hits an open set $ G$ implies there exists $ N\ge 1$ such that for all $ n\ge N$, $ F_{ n}$ hits $ G$. 
        \item $ F$ misses a compact set $ K$ implies there exists $ N\ge 1$ such that for all $ n\ge N$, $ F_{ n}$ misses $ K$. 
\end{itemize}
The Fell topology on the closed sets of $ \mathbb{R}^{ 2}$ is
metrizable and we indicate convergence in this topology of a sequence $\{F_n\}$ of
closed sets to a limit closed set $F$ by $F_n \to F$.
Sometimes, rather than work with the topology, it is easier to deal with the following characterization of convergence.
\begin{lemma}\label{lem:setconv} 
 A sequence $ F_{ n}\in\mathcal{F}$ converges to  $ F\in\mathcal{F}$ in the Fell topology if and only if the following two conditions hold:
\begin{enumerate}
\item For any  $ t\in F$  there exists $   t_{ n}\in F_{ n}$  such that $  t_{ n}\to t.$
\item If for some subsequence $ (m_{ n})$, $ t_{ m_{ n}}\in F_{ m_{ n}}$ converges, then $ \lim\limits_{ n\to \infty} t_{ m_{ n}}\in F$.
\end{enumerate}
\end{lemma}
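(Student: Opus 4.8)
The plan is to recognize that conditions (1) and (2) together are precisely Painlev\'e--Kuratowski convergence: condition (1) asserts $F\subseteq \liminf_n F_n$ and condition (2) asserts $\limsup_n F_n\subseteq F$, where the lower limit is the set of limits of full sequences $t_n\in F_n$ and the upper limit is the set of limits of subsequences $t_{m_n}\in F_{m_n}$. Since $\liminf_n F_n\subseteq \limsup_n F_n$ always holds (a full sequence is in particular a subsequence), the two conditions are jointly equivalent to $\liminf_n F_n=\limsup_n F_n=F$. Thus the lemma is the standard fact that in the locally compact, second countable, Hausdorff space $\mathbb{R}^2$, Fell convergence coincides with Painlev\'e--Kuratowski convergence, and I would prove the two implications directly from the hit-and-miss description of Fell convergence recalled just above the statement.

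For the forward implication (Fell convergence implies (1) and (2)), I would argue as follows. To obtain (1), fix $t\in F$ and apply the hit condition to the balls $G_k=\{s:|s-t|<1/k\}$: since $t\in F\cap G_k$, there is an index $N_k$, which I take strictly increasing, beyond which $F_n$ hits $G_k$; choosing $t_n\in F_n\cap G_k$ for $N_k\le n<N_{k+1}$ produces a sequence with $t_n\to t$. To obtain (2), suppose $t_{m_n}\in F_{m_n}$ with $t_{m_n}\to t$ but $t\notin F$. Since $F$ is closed, some closed ball $K=\{s:|s-t|\le \varepsilon\}$ is disjoint from $F$, so the miss condition forces $F_n\cap K=\emptyset$ for all large $n$, contradicting $t_{m_n}\in K$ for large $n$.

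For the reverse implication I would verify the hit and miss conditions from (1) and (2). The hit condition is immediate: if $t\in F\cap G$ with $G$ open, the approximating sequence $t_n\to t$ supplied by (1) eventually lies in $G$, so $F_n$ hits $G$. The miss condition is where the real content lies. If $F\cap K=\emptyset$ for a compact $K$ yet $F_n$ fails to miss $K$ along some subsequence, I would pick $t_{m_n}\in F_{m_n}\cap K$; compactness of $K$ yields a further convergent subsequence whose limit $t$ lies in $K$, and condition (2) then places $t\in F$, contradicting $F\cap K=\emptyset$.

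The main obstacle is the miss half of the reverse direction: it is the only step that genuinely exploits the local compactness of $\mathbb{R}^2$ (bounded closed sets are compact), entering precisely through the extraction of a convergent subsequence from $K$ before condition (2) can be invoked. A secondary point to dispatch is the status of empty sets, since condition (1) presupposes a genuine sequence $t_n\in F_n$: if $F\neq\emptyset$ then the hit condition guarantees $F_n\neq\emptyset$ for all large $n$ and the finitely many exceptional indices do not affect the limit, while the case $F=\emptyset$ renders (1) vacuous and is handled directly from the miss condition.
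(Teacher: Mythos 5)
Your proof is correct, and it is worth noting that the paper itself contains no proof of this lemma: it simply cites Theorem 1-2-2 of \cite{matheron1975rsa}. What you have written is precisely the self-contained argument that the paper outsources. Your identification of conditions (1) and (2) with Painlev\'e--Kuratowski convergence ($F\subseteq\liminf_n F_n$ and $\limsup_n F_n\subseteq F$), and the verification via the hit-and-miss criteria that this coincides with Fell convergence, is exactly the content of Matheron's theorem, which is stated there for general locally compact, second countable Hausdorff spaces; you specialize to $\mathbb{R}^2$, where closed balls supply the needed compact sets. All four implications in your argument are sound: the nested-balls construction for (1) (with the strictly increasing $N_k$ making the diagonal choice converge), the compact-ball contradiction for (2), the immediate hit verification, and the Bolzano--Weierstrass extraction for the miss verification, where condition (2) correctly applies to the further subsequence. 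Your treatment of the empty-set edge case is also the right way to make sense of condition (1) when some $F_n$ are empty. One small correction to your closing commentary: local compactness is not exploited only in the miss half of the reverse implication. It also enters in your forward proof of (2), where you need the closed $\varepsilon$-ball around $t$ to be a \emph{compact} set disjoint from $F$ before the miss condition can be invoked; in a space without compact neighborhoods that step would fail as well. This does not affect the validity of the proof, only the accounting of where the hypothesis on the ambient space is used.
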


See Theorem 1-2-2 in \cite[p.6]{matheron1975rsa} for a proof of this
Lemma.
Since the topology is metrizable, the  definition of convergence in
probability is obvious. The following result is a well-known and
helpful characterization for convergence in probability of random
variables and it holds for random sets  as well; see Theorem 6.21 in
\cite[p.92]{molchanov:2005}. 

\begin{lemma}\label{lem:convprob} 
 A sequence of random sets $ (F_{ n})$ in $ \mathcal{F}$ converges in probability to a random set $ F$ if and only if for every subsequence $ (n^{ \prime })$ of  $ \mathbb{Z}_{ +}$ there exists a further subsequence $ (n^{ \prime \prime })$ of $ (n^{ \prime })$ such that $ F_{ n^{ \prime \prime }} \to F$-a.s.
\end{lemma}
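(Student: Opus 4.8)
The plan is to exploit the metrizability of the Fell topology to reduce the statement to the classical subsequence characterization of convergence in probability for real-valued random variables. Fix a metric $d$ generating the Fell topology on $\mathcal{F}$. Convergence in probability of $(F_n)$ to $F$ then means precisely that the real-valued quantities $Y_n := d(F_n, F)$ tend to $0$ in probability, i.e. $P[Y_n > \epsilon] \to 0$ for every $\epsilon > 0$; and by metrizability an almost sure statement $F_{m_n} \to F$ is the same as $Y_{m_n} \to 0$ almost surely. Before doing anything else I would check that each $Y_n$ is genuinely a random variable: since $d$ is continuous, hence Borel measurable, on $\mathcal{F}\times \mathcal{F}$, and since $F_n$ and $F$ are measurable maps into $\mathcal{F}$ with respect to the Borel/Effros $\sigma$-algebra generated by the hit-and-miss topology, the composition $\omega \mapsto d(F_n(\omega), F(\omega))$ is measurable. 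This reduces the problem entirely to the real random variables $(Y_n)$.

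For the forward implication, assume $Y_n \to 0$ in probability and let $(n')$ be an arbitrary subsequence; then $Y_{n'} \to 0$ in probability as well. Choose inductively a further subsequence $(n'')$ so that the $k$-th term satisfies $P[Y_{n''} > 1/k] \le 2^{-k}$. Then $\sum_k P[Y_{n''} > 1/k] < \infty$, so by the Borel--Cantelli lemma $Y_{n''} \to 0$ almost surely, which by metrizability is exactly $F_{n''} \to F$ almost surely.

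For the converse, suppose the subsequence condition holds but, toward a contradiction, $Y_n \not\to 0$ in probability. Then there exist $\epsilon > 0$, $\delta > 0$, and a subsequence $(n')$ with $P[Y_{n'} > \epsilon] \ge \delta$ for every $n'$. By hypothesis this subsequence admits a further subsequence $(n'')$ with $F_{n''} \to F$ almost surely, hence $Y_{n''} \to 0$ almost surely and therefore in probability, giving $P[Y_{n''} > \epsilon] \to 0$ and contradicting $P[Y_{n''} > \epsilon] \ge \delta$. Thus $Y_n \to 0$ in probability, which is the asserted convergence in probability of $(F_n)$ to $F$.

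The only point special to random sets, and hence the main thing to be careful about, is the measurability claim of the first paragraph; once $Y_n = d(F_n, F)$ is known to be a bona fide random variable, the remainder is the standard metric-space argument. The delicate part is therefore not the limiting argument but verifying that the Effros $\sigma$-algebra structure of random closed sets renders the distance functional measurable, so that the very phrase ``$Y_n \to 0$ in probability'' is meaningful; this is exactly where the cited general theory of random closed sets is needed.
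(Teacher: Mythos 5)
Your argument is correct, but it is worth knowing that the paper does not actually prove this lemma at all: it simply cites Theorem 6.21 in Molchanov (2005, p.~92) and moves on. So your proposal supplies what the paper outsources. Your route --- metrize the Fell topology, set $Y_n = d(F_n,F)$, and run the classical subsequence characterization (Borel--Cantelli with $P[Y_{n''}>1/k]\le 2^{-k}$ for the forward direction, contradiction for the converse) --- is the standard proof and is exactly the kind of argument the cited reference encapsulates. What the citation buys the authors is brevity and the ability to sidestep the measurability bookkeeping; what your self-contained proof buys is transparency about what is really needed: metrizability, plus the fact that $\omega\mapsto d(F_n(\omega),F(\omega))$ is a genuine random variable. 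On that last point, which you rightly flag as the only delicate one, you should make one step explicit: measurability of the composition requires that the Borel $\sigma$-algebra of $\mathcal{F}\times\mathcal{F}$ coincide with the product $\sigma$-algebra $\mathcal{B}(\mathcal{F})\otimes\mathcal{B}(\mathcal{F})$, which holds because the Fell topology on the closed subsets of $\mathbb{R}^2$ is not merely metrizable but second countable (indeed compact metrizable), so $\mathcal{F}$ is separable; without separability, continuity of $d$ would only give Borel measurability with respect to the product-topology Borel sets, not the product $\sigma$-algebra against which $(F_n,F)$ is measurable. With that sentence added, your proof is complete and fully replaces the citation.
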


We use the following notation: For a real number $ x$ and a
set $ A\subset \mathbb{R}^{ n}$, $xA=\{   xy:y\in A  \} $.

\cite{matheron1975rsa} and \cite{molchanov:2005} are good
references for the theory of random sets.

\subsection{Miscellany.}\label{subsec:misc}
Throughout this paper we will take $ k:=k_{ n}$ to be a sequence
increasing to infinity such that $ k_{ n}/n\to 0$.  For a distribution
function $F(x)$ we write $\bar F(x)=1-F(x) $ for the tail and the
quantile function is 
$$F^\leftarrow (1-\frac 1u)=\inf\{s:F(s)\geq
1-\frac 1u\}=\Bigl(\frac{1}{1-F}\Bigr)^\leftarrow (u).$$
  A function
$U:(0,\infty)\mapsto \mathbb{R}_+$ is regularly varying with index
$\rho \in \mathbb{R}$, written $U\in RV_\rho$, if
$$\lim_{t\to\infty} \frac{U(tx)}{U(t)}=x^\rho,\quad x>0.$$
We denote  the space of nonnegative Radon measures $ \mu$ on $
(0,\infty]$ metrized by the vague metric by $ M_{ +}(0,\infty]$.  Point measures are written as a function of their points $\{x_i,
i=1,\dots,n\}$ by $\sum_{i=1}^n \delta_{x_i}.$ See, for example,
\cite[Chapter 3]{resnick:1987}.

{We will use the following notations to denote different classes of functions: For $  0\le a<b\le \infty$
\begin{enumerate}[(i)] 
        \item  $ D[a,b)$: Right-continuous functions with finite left limits defined on $ [a,b)$.
        \item $ D_{ l}[a,b)$: Left-continuous functions with finite right limits defined on $ [a,b)$.
\end{enumerate}
We will assume that these spaces are equipped with the Skorokhod topology and the distance function. In some cases we will also consider product spaces of functions and then the topology will be the product topology. For example, $ D^{ 2}_{ l}[1,\infty)$ will denote the class of 2-dimensional functions on $ [1,\infty)$. The classes of functions defined on the sets $ [a,b]$ or $ (a,b]$ will have the obvious notation.  }

{\begin{section}{Mean Excess Plots}\label{sec:main}
As discussed in the introduction, a random variable having $ G_{
  \xi,\beta }$ distribution with $ \xi<1$ has a linear ME function
given by \eqref{eq:megdp} where the slope $\xi$  is positive $(0<\xi<1),$ negative or
 $\xi=0$. We consider these three cases separately. 

\begin{subsection}{Positive Slope}\label{subsec:posxi}
In this subsection we concentrate on the case where $ \xi>0$. {A finite
mean for $F$ is guaranteed when $\xi<1$ and we also investigate what
happens when $\xi \geq 1$.}

The following Theorem is a combination of Theorem 3.3.7 and Theorem 3.4.13(b) in \cite{embrechtskluppelbergmikosch:1997}.
\begin{theorem}\label{thm:char:pos:xi} 
Assume $ \xi>0$. The following are equivalent for a cumulative distribution function $ F$:
 \begin{enumerate} 
        \item  $ \bar F \in RV_{ -1/\xi}$, i.e., for every $ t>0$ 
$
 	\lim_{ x \rightarrow \infty }\frac{ \bar F(tx)}{ \bar F(x)}= t^{ -1/\xi}.   
$
\item $ F$ is in the maximal  domain of attraction of  a Frechet distribution with parameter $ 1/\xi$, i.e.,
\[ 
 	\lim_{ n \rightarrow \infty  } F^{ n}(c_{ n}x)=\exp\{  - x^{ -1/\xi}  \} \ \ \ \ \mbox{ for all } x>0.   
\] 
where $ c_{ n}=F^{ \leftarrow }(1-n^{ -1})$.
\item There exists a positive measurable function $ \beta(u)$ such that 
\begin{equation}\label{eq:pick:Bal:deH} 
 	\lim_{ u \rightarrow \infty }\sup_{x\ge u } \big| F_{ u}(x)-G_{ \xi,\beta (u)}(x) \big|=0.    
\end{equation}
\end{enumerate}
\end{theorem}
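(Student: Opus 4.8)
The plan is to establish the two nontrivial equivalences $(1)\Leftrightarrow(2)$, which is the Gnedenko characterization of the Fr\'echet domain of attraction, and $(1)\Leftrightarrow(3)$, which is the Pickands--Balkema--de Haan relation; together these close the cycle among the three conditions. Throughout I would write the excess tail as $\bar F_u(x)=\bar F(u+x)/\bar F(u)$ and the GPD tail as $\bar G_{\xi,\beta(u)}(x)=(1+\xi x/\beta(u))^{-1/\xi}$, so that condition $(3)$ becomes the statement that $\bar F_u(x)-\bar G_{\xi,\beta(u)}(x)\to 0$ uniformly in $x$, which is the form I can manipulate with the theory of regular variation.

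For $(1)\Leftrightarrow(2)$ I would linearize the max limit. Writing $\alpha=1/\xi$ and taking logarithms in $F^n(c_nx)\to\exp(-x^{-\alpha})$, and using $-\log F(c_nx)\sim \bar F(c_nx)$ because $\bar F(c_nx)\to 0$, the max limit is equivalent to $n\bar F(c_nx)\to x^{-\alpha}$ for every $x>0$. Taking $x=1$ with $c_n=F^{\leftarrow}(1-n^{-1})$ gives $n\bar F(c_n)\to 1$, so dividing yields $\bar F(c_nx)/\bar F(c_n)\to x^{-\alpha}$, which is regular variation along the sequence $(c_n)$. I would upgrade this to full regular variation of $\bar F$ by sandwiching a general large $t$ between consecutive terms $c_n\le t< c_{n+1}$, using the monotonicity of $\bar F$ together with $\bar F(c_{n+1})/\bar F(c_n)\to 1$ (immediate from $n\bar F(c_n)\to 1$). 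The converse runs the same computation backwards: given $\bar F\in RV_{-\alpha}$ the same choice of $c_n$ gives $n\bar F(c_n)\to 1$, hence $n\bar F(c_nx)\to x^{-\alpha}$, and therefore $F^n(c_nx)=(1-\bar F(c_nx))^n\to\exp(-x^{-\alpha})$.

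For $(1)\Rightarrow(3)$ I would take $\beta(u)=\xi u$, so the target tail becomes $(1+x/u)^{-1/\xi}$. The substitution $y=1+x/u$ recasts the uniform statement of $(3)$ as $\sup_{y}\bigl|\bar F(uy)/\bar F(u)-y^{-1/\xi}\bigr|\to 0$ over the relevant half-line $[b,\infty)$ with $b\ge 1$, and since $\bar F\in RV_{-1/\xi}$ has negative index, the uniform convergence theorem for regularly varying functions (see \cite{resnick:1987}) supplies exactly this uniformity.

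The main obstacle is $(3)\Rightarrow(1)$. Substituting $x=\beta(u)s$ turns the uniform statement of $(3)$ into $\bar F(u+\beta(u)s)/\bar F(u)\to(1+\xi s)^{-1/\xi}$ uniformly in $s\ge 0$, which is precisely de Haan's auxiliary-function form of membership in a domain of attraction. The delicate point is to show that the scale function must satisfy $\beta(u)/u\to\xi$: I expect to obtain this from the internal consistency of iterating the excess operation, since taking excesses of $F_u$ over a further threshold again produces, in the limit, a GPD of the same shape $\xi$, and this semigroup structure combined with the monotonicity of $\bar F$ and the uniqueness of the limit type pins down the asymptotics of $\beta$. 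Once $\beta(u)\sim\xi u$ is established, writing $t=1+\xi s$ gives $u+\beta(u)s\sim ut$ and hence $\bar F(ut)/\bar F(u)\to t^{-1/\xi}$ for $t\ge 1$, which extends to all $t>0$ and yields $\bar F\in RV_{-1/\xi}$, closing the cycle. Controlling the auxiliary scale, rather than any single estimate, is where the real work lies.
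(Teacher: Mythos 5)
First, a point of reference: the paper itself does not prove this theorem at all --- it is stated as a combination of Theorems 3.3.7 and 3.4.13(b) of \cite{embrechtskluppelbergmikosch:1997} --- so your argument has to be judged on its own merits rather than against an in-paper proof. Two of your three implications are sound. The treatment of $(1)\Leftrightarrow(2)$ (linearize the max limit to $n\bar F(c_nx)\to x^{-1/\xi}$, obtain regular variation along the sequence $(c_n)$, then upgrade to full regular variation by sandwiching $c_n\le t<c_{n+1}$ using monotonicity of $\bar F$ and $\bar F(c_{n+1})/\bar F(c_n)\to1$) is the classical Gnedenko argument and is correct. Likewise $(1)\Rightarrow(3)$ with the explicit choice $\beta(u)=\xi u$ is correct: after the substitution $y=1+x/u$, condition (3) becomes uniform convergence of $\bar F(uy)/\bar F(u)$ to $y^{-1/\xi}$ on a half-line $[b,\infty)$, $b\ge 1$, which is exactly what the uniform convergence theorem for regularly varying functions of negative index provides \citep{resnick:1987, bingham:goldie:teugels:1989}.

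The genuine gap is in $(3)\Rightarrow(1)$, at precisely the point you flag and then leave unresolved. Your semigroup/uniqueness-of-type idea does yield a concrete statement: applying (3) at the two thresholds $u$ and $v=u+s_0\beta(u)$ and invoking the convergence-to-types theorem gives
\begin{equation*}
\frac{\beta\bigl(u+s_0\beta(u)\bigr)}{\beta(u)}\longrightarrow 1+\xi s_0 \qquad (u\to\infty)
\end{equation*}
for each fixed $s_0\ge 0$. But this relation does not by itself pin down $\beta(u)/u\to\xi$: it constrains $\beta$ only along orbits $u_{n+1}=u_n+s_0\beta(u_n)$. Setting $s_0=1$ and $r_n=u_n/\beta(u_n)$, it gives the perturbed contraction $r_{n+1}=(r_n+1)/[(1+\xi)(1+o(1))]$, whose fixed point is $1/\xi$; this shows $u_n/\beta(u_n)\to 1/\xi$ \emph{far out along each orbit}, but says nothing about $\beta(u)/u$ at an arbitrary large $u$, since the backward recursion is expansive and the orbit points form a countable set that does not cover all large thresholds. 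Converting orbit information into the full limit requires uniform (Potter-type) bounds and a covering/sandwich argument; this is exactly the content of the nontrivial lemma in the theory of extended regular variation on which the standard proofs rest. The clean way to close your argument is to invert: the pointwise form of (3) is equivalent to $(U(tx)-U(t))/a(t)\to(x^{\xi}-1)/\xi$ for the tail quantile function $U(t)=F^{\leftarrow}(1-1/t)$ with $a(t)=\beta(U(t))$, and then, for $\xi>0$, Lemma 1.2.9 of \cite{dehaan:ferreira:2006} (or Chapter 3 of \cite{bingham:goldie:teugels:1989}) gives $a(t)/U(t)\to\xi$ and $U\in RV_{\xi}$, hence $\bar F\in RV_{-1/\xi}$. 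As written, your proposal assumes this step rather than proving it, and since it is the whole substance of $(3)\Rightarrow(1)$, that direction remains open in your sketch.
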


Theorem \ref{thm:char:pos:xi}(3) is one case of the
 Pickands-Balkema-de Haan theorem. It guarantees the existence of a
 measurable function $ \beta (u)$ for which \eqref{eq:pick:Bal:deH}
 holds but does not construct this function. However, $\beta (u)$  can be obtained
 from  Karamata's representation of a regular varying
 function \citep{bingham:goldie:teugels:1989}, namely if $\bar F \in
 RV_{-1/\xi}$,  there exists $ 0<z<
 \infty$ such that  
\[ 
 	\bar F(x)   = c(x)\exp\left\{\int_{ z}^{ x} \frac{ 1}{a(t) }dt\right\} \ \ \ \ \mbox{ for all } z<x< \infty
\]
where $ c(x)\to c>0$ and $ a(x)/x\to \xi$ as $ x\to \infty$. An easy
computation shows as $ u\to \infty$,
\begin{align*}
	\frac{ \bar F(u+xa(u))}{\bar F(u) }  = & (1+o(1))\exp \left\{ - \int_{ u}^{ u+xa(u)} \frac{ 1}{a(t) }dt \right\}  \\
= & (1+o(1))\exp \left\{ - \int_{ u}^{ u+x\xi u(1+o(1))} \frac{ t}{a( t) } \frac{dt}{t} \right\}
\to  (1+\xi x)^{ -1/\xi}.
\end{align*}
This means that if $ X$ is a random variable having distribution $ F$
then for large $u$,
\[ 
 	P\Big[  \frac{ X-u}{a(u) }\le x\Big| X>u \Big] \approx  G_{ \xi,1}(x)   
\]
and $ a(u)$ is a choice for the scale parameter $
\beta (u)$ in \eqref{eq:pick:Bal:deH}. Hence we get  that $ \beta (u)/u\to \xi$ as $ u\to
\infty$ by the convergence to types theorem \citep{resnick:1987}.  

Consider the ME plot for iid random variables having common distribution $
F$ which satisfies $ \bar F\in RV_{ -1/\xi}$ for some $ \xi>0$.  Since the excess
distribution is well approximated by the GPD for high thresholds, we
expect that  for $ \xi<1$, the ME function will look similar
to that of the GPD for high thresholds and therefore seek evidence of
linearity in the plot. We first consider the ME plot when $ 0<\xi<1$
and will discuss the case $ \xi\ge1$ separately.  Furthermore, we see that for each $ n\ge 1$, the mean excess plot, being a finite set of $ \mathbb{R}^{ 2}$-valued random variables, is measurable and a random closed set. It follows from the definition of random sets; see Definition 1.1 in \cite[p. 2]{molchanov:2005}.

\subsubsection{Heavy tail with a finite mean; $0<\xi
  <1$.}\label{subsubsec:finitemean} 
The scaled and thresholded ME plot converges to a deterministic line.

\begin{theorem}\label{thm:positive:xi} 
If $ (X_{ n},n\ge 1)$ are iid observations with distribution $ F$ satisfying $ \bar F\in RV_{ -1/\xi }$ with $ 0<\xi<1$,  then in $ \mathcal{F}$
 \begin{equation}\label{eq:lim:pos} 
 	\mathcal{S}_{ n}:=\frac{ 1}{X_{ (k)}  }  \left\{   \big(X_{  (i)},\hat M(X_{ (i)})\big) :i=2,\ldots, k\right\}   \stackrel{ P}{\longrightarrow } \mathcal{S}:=\Big\{   \Big(t, \frac{ \xi}{1-\xi } t \Big):t\ge 1  \Big\} .
\end{equation}

\end{theorem}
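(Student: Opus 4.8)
The plan is to reduce the claim to a law-of-large-numbers for the upper order statistics and then translate that into Fell convergence of the plot. Write $b(n/k):=\big(1/(1-F)\big)^{\leftarrow}(n/k)$, which is $RV_\xi$ since $\bar F\in RV_{-1/\xi}$, and recall that, as $F$ is continuous there are a.s.\ no ties, so $\hat M(X_{(j)})=\frac{1}{j-1}\sum_{l=1}^{j-1}X_{(l)}-X_{(j)}$ for $2\le j\le k$. The engine of the proof is the tail empirical measure convergence
\[
\nu_n:=\frac1k\sum_{i=1}^n\delta_{X_i/b(n/k)}\stackrel{P}{\longrightarrow}\nu\quad\text{in }M_+(0,\infty],\qquad \nu(x,\infty]=x^{-1/\xi},
\]
which is equivalent to $\bar F\in RV_{-1/\xi}$. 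From it I extract two convergences, locally uniform on compact subsets of $(0,1]$: the quantile process $X_{(\lceil ks\rceil)}/b(n/k)\stackrel{P}{\longrightarrow}s^{-\xi}$ (by inverting $\nu_n(x,\infty]\to x^{-1/\xi}$) and the partial-sum process $\frac1k\sum_{l=1}^{\lceil ks\rceil}X_{(l)}/b(n/k)\stackrel{P}{\longrightarrow}\int_0^s u^{-\xi}\,du=\frac{s^{1-\xi}}{1-\xi}$. Taking $s=1$ gives $X_{(k)}/b(n/k)\stackrel{P}{\longrightarrow}1$, so the data-driven normalization $X_{(k)}$ may be freely exchanged with $b(n/k)$.

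By Lemma \ref{lem:convprob} it suffices to show that every subsequence has a further subsequence along which the three displayed convergences hold almost surely; on that event I then verify the two conditions of Lemma \ref{lem:setconv} pathwise. Parametrizing a plot point by $j=\lceil ks\rceil$ and dividing numerator and denominator by $k$, the computation
\[
\frac{\hat M(X_{(\lceil ks\rceil)})}{b(n/k)}=\frac{\frac1k\sum_{l=1}^{\lceil ks\rceil-1}X_{(l)}/b(n/k)}{(\lceil ks\rceil-1)/k}-\frac{X_{(\lceil ks\rceil)}}{b(n/k)}\longrightarrow\frac{s^{1-\xi}/(1-\xi)}{s}-s^{-\xi}=\frac{\xi}{1-\xi}\,s^{-\xi}
\]
shows, after replacing $b(n/k)$ by $X_{(k)}$ using $X_{(k)}/b(n/k)\to1$, that the point $\big(X_{(\lceil ks\rceil)}/X_{(k)},\hat M(X_{(\lceil ks\rceil)})/X_{(k)}\big)$ converges to $\big(t,\frac{\xi}{1-\xi}t\big)\in\mathcal S$ with $t=s^{-\xi}$. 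As $s$ ranges over $(0,1]$, $t=s^{-\xi}$ sweeps out $[1,\infty)$, producing exactly the limit set $\mathcal S$.

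Condition (1) of Lemma \ref{lem:setconv} follows by fixing $t\ge1$ and choosing $j_n=\lceil k\,t^{-1/\xi}\rceil$, so the corresponding plot point converges to $\big(t,\frac{\xi}{1-\xi}t\big)$. For condition (2), if plot points along some subsequence converge to a finite limit $(x_0,y_0)$, then $x_0\ge1$ because $X_{(j_n)}\ge X_{(k)}$; since $s\mapsto s^{-\xi}$ is a homeomorphism of $(0,1]$ onto $[1,\infty)$ and the quantile-process convergence is locally uniform, $X_{(j_n)}/X_{(k)}\to x_0<\infty$ forces $j_n/k\to x_0^{-1/\xi}\in(0,1]$, whence the display above (with $s=x_0^{-1/\xi}$) gives $y_0=\frac{\xi}{1-\xi}x_0$ and $(x_0,y_0)\in\mathcal S$. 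Points with $j_n/k\to0$ have $x$-coordinate tending to $\infty$ and so do not form convergent sequences, which is consistent with $\mathcal S$ being unbounded.

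The main obstacle is the partial-sum convergence \emph{uniformly up to $s=0$}, i.e.\ controlling the contribution of the very largest order statistics $X_{(1)},X_{(2)},\dots$ to the empirical first moment. Vague convergence in $M_+(0,\infty]$ only handles integrals of functions with compact support in $(0,\infty]$, so I would fix a large level $L$, split the first moment at $L$, let the part over a bounded region $(a,L]$ converge by vague convergence, and then show the residual $\frac1k\sum_{l:\,X_{(l)}>Lb(n/k)}X_{(l)}/b(n/k)$ is negligible uniformly in $n$ as $L\to\infty$. This residual converges (by Karamata's theorem) to $\int_L^\infty x\,\nu(dx)=\frac{L^{1-1/\xi}}{1-\xi}$, which is finite and tends to $0$ as $L\to\infty$ precisely because $1/\xi>1$. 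This uniform-integrability estimate is the single place where the hypothesis $\xi<1$ is essential, and it is exactly what makes the plot consistent here while failing for $\xi\ge1$.
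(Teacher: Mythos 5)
Your argument is correct and is essentially the paper's own proof: both run on convergence of the tail empirical measure, both establish process-level convergence of $s\mapsto\bigl(X_{(\lceil ks\rceil)},\hat M(X_{(\lceil ks\rceil)})\bigr)$ under suitable normalization, both control the contribution of the largest order statistics by the same Karamata truncation argument (the one place where $\xi<1$ enters), and both convert the functional convergence to Fell convergence in probability through Lemmas \ref{lem:convprob} and \ref{lem:setconv}. The differences are only bookkeeping: you normalize by $b(n/k)$ and transfer to $X_{(k)}$ at the end via $X_{(k)}/b(n/k)\stackrel{P}{\longrightarrow}1$, where the paper works with the $X_{(k)}$-normalized tail empirical measure throughout, and you write $\hat M(X_{(j)})$ as a partial sum of order statistics, which is precisely the paper's representation $\hat M(X_{(\lceil ku\rceil)})=\frac{kX_{(k)}}{\lceil ku\rceil-1}\int_{X_{(\lceil ku\rceil)}/X_{(k)}}^{\infty}\hat\nu_n(x,\infty]\,dx$ after integration by parts, the paper instead packaging the second-coordinate convergence as continuity of the maps $T_K$ and $\tilde T$.
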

 
 \begin{remark}\label{rem:normalisation} 
Roughly,  this result implies 
 \[ 
X_{(k)} \mathcal{S}_{ n}:=\left\{   \big(X_{ (i)},\hat M(X_{ (i)})\big) :i=2,\ldots, k_{ n} \right\}\approx X_{ (k)} \mathcal{S}.
\]

The plot of the points $ \mathcal{S}_{ n}$ is a little different from the original
ME plot. In practice,  people plot of the points $ \{   (X_{
  (i)}),\hat M(X_{ (i)}):1<i\le n  \}$ {but our result restricts
attention to the higher order statistics corresponding to 
$X_{(1)},\dots,X_{(k)}.$  This restriction is natural and corresponds
to looking at observations over  high thresholds. One imagines
 zooming into the area of interest in the complete ME plot. }

This result scales the points $ (X_{ (i)},\hat M(X_{ (i)}))$  by
$ X_{ (k)}^{ -1}$.  Since both  co-ordinates of the points in the
plot are scaled, we do not change the structure or appearance of the plot but only
the scale of the axes. Hence we may still  estimate the slope of the
line if we want to
estimate $ \xi$ by this method 
\citep{davison1990meo}. The scaling is important because the points $
\{   (X_{ (i)}),\hat M(X_{ (i)}):1<i\le k  \}$ are moving to infinity
and the Fell topology is not equipped to handle sets which are moving
out to infinity. Furthermore, the
{regular variation assumption}
 on the tail of $ F$ involves a ratio condition and thus it
is natural that the random set convergence uses scaling.

{A central assumption in Theorem \ref{thm:positive:xi} is that the random variables $ \{   X_{ i}  \}$ are iid.  The proof of the theorem below will explain that an important tool is the convergence of the tail empirical measure $ \hat{\nu}_{ n}$ in \eqref{eq:tailempirical}. By Proposition 2.1 in \cite{resnick1998tail}, we know that the iid assumption of the random variables is not a necessary condition for the convergence of the tail empirical measure. We believe as long as the tail empirical measure converges, our result should hold. }
\end{remark}
 
\begin{proof} 
 We show that for every subsequence $m_{ n} $ of integers there exists a further subsequence  $ l_{ n}$ of $ m_{ n}$ such that 
\begin{equation}\label{eq:almostsure} 
 	\mathcal{S}_{ l_{ n}}  \to \mathcal{S} \ \ \ \ \mbox{ a.s. }   
\end{equation}
Define the tail empirical measure as a random element of $ M_{ +}(0,\infty]$  by
\begin{equation}\label{eq:tailempirical} 
 	\hat\nu_{ n}:= \frac{ 1}{ k}\sum_{ i=1}^{ n} \delta _{ X_{ i}/X_{ (k)}}   .
\end{equation}
Following (4.21)  in  \cite[p.83]{resnick2006htp} we get that 
\begin{equation}\label{eq:measure.weak}
 	\hat\nu_{ n} \Rightarrow \nu    \ \ \ \ \mbox{ in } M_{ +}(0,\infty]
\end{equation}
where $ \nu(x,\infty]=x^{ -1/\xi}, x>0.$ Now consider 
\[ 
 	S_{ n}(u)= \Big( \frac{ X_{ (\lceil k u \rceil)}}{X_{ (k)} }, \frac{\hat M(X_{(\lceil ku\rceil) }) }{X_{ (k)}} \Big)    \ \ \ \  u\in (0,1]
\]
as random elements in $ D^{ 2}_{ l}(0,1]$.
We will show that $S_{ n}(\cdot) \stackrel{ P}{ \longrightarrow }  S(\cdot) $ in $ D^{ 2}_{ l}(0,1]$, where
 \[ 
 	 S(u)=    \Big(u^{ -\xi}, \frac{ \xi}{1-\xi } u^{ -\xi} \Big) \ \ \ \ \mbox{ for all } 0<u\le 1.
\]
We already know the result for the first component of $ S_{ n}$, i.e., $ S^{ (1)}_{ n}(t):=X_{ (\lceil kt \rceil)}/X_{ (k)}\stackrel{ P}{\to  } t^{ -\xi} $ in $ D_{ l}(0,1]$; see \cite[p.82]{resnick2006htp}. Since the limits are non-random it suffices to prove the convergence of the second component of $ S_{ n}$.  Observe that the empirical mean excess function can be obtained from the tail empirical measure:
\[ 
 	S^{ (2)}_{ n}(u):= \frac{\hat M(X_{(\lceil ku \rceil)})}{X_{ (k)}}   = \frac{ k}{\lceil ku\rceil-1 }\int_{ X_{ (\lceil ku\rceil)}/X_{ (k)}}^{ \infty} \hat \nu_{ n}(x, \infty]dx.
\] 
Consider the maps $ T$ and $ T_{ K}$  from $M_{ +}(0,\infty]$ to $ D_{ l}[1,\infty)$ defined by
\[ 
 	T(\mu)(t) =  \int_{ t}^{ \infty}\mu (x,\infty] dx\ \ \mbox{ and }\ \ T_{ K}(\mu)(t) =  \int_{ t}^{ K\vee t}\mu (x,\infty] dx.   
\]
We understand $ T(\mu)(t)= \infty$ if $ \mu(x,\infty]$ is not integrable. We will show that $T(\hat \nu_{ n}) \stackrel{ P}{ \to}T(\nu)  $. The function $ T_{ K}$ is obviously continuous and therefore $ T_{ K}(\hat \nu_{ n})\stackrel{ P}{\to  } T_{ K}(\nu) $ in $ D_{ l}[1,\infty)$.
In order to prove that $T(\hat \nu_{ n}) \stackrel{ P}{ \to}T(\nu)  $ it suffices to show that for any $ \epsilon >0$
\[ 
 	\lim_{ K \rightarrow \infty } \limsup_{ n \rightarrow \infty  } P\Big[\big\| T_{ K}(\hat\nu_{ n}) -T(\hat\nu_{ n})\big\| >\epsilon  \Big]=0,    
\]
where $ \|\cdot\|$ is the supnorm on $ D_{ l}[1,\infty)$. To verify
this claim,  note that 
$$
\lim_{ K \rightarrow \infty } \limsup_{ n \rightarrow \infty  } P\Big[\big\| T_{ K}(\hat\nu_{ n}) -T(\hat\nu_{ n})\big\| >\epsilon  \Big]
\leq \lim_{ K \rightarrow \infty } \limsup_{ n \rightarrow \infty  } P\Big[ \int_{ K}^{ \infty }\hat \nu_{ n}(x, \infty]dx>\epsilon  \Big] 
$$
and the rest is proved easily following the arguments used in the step 3 of the proof of Theorem 4.2 in \cite[p.81]{resnick2006htp}. 

Suppose $ D_{ l,\ge 1}(0,1]$ is the subspace of $ D_{ l}(0,1]$ consisting  only of functions which are never less than 1. Consider the random element $ Y_{ n}$ in the space $ D_{ l,\ge 1}(0,1]\times D_{ l}[1, \infty)$, 
\[ 
 	Y_{ n}:=\Big( \frac{ X_{ (\lceil k \,\cdot \rceil)}}{X_{ (k)} }, T(\hat \nu_{ n}) \Big).    
\]
From what we have obtained so far it is easy to check that $ Y_{ n}\stackrel{ P}{\to }Y $, where 
\[ 
 	Y(u,t)=\Big(u^{ -\xi}, \frac{ \xi}{1-\xi } t^{ (\xi-1)/\xi}\Big)   .
\]
The map $ \tilde T:D_{ l,\ge 1}(0,1]\times D_{ l}[1, \infty)\to D_{ l}(0,1] $ defined by 
\[ 
 	\tilde T(f,g)(u)=g(f(u)) \ \ \ \ \mbox{ for all } 0<u\le 1   
\]
is continuous if $ g$ and $ f$ are continuous and therefore 
\[ 
 	\tilde T (Y_{ n})(u)=    \int\limits_{ X_{ (\lceil ku\rceil)}/X_{ (k)}}^{ \infty} \hat \nu_{ n}(x, \infty]dx \stackrel{ P}{\longrightarrow  }  \frac{ \xi}{1-\xi } u^{ 1-\xi} \ \ \ \ \mbox{ in } D_{ l}(0,1].
\]
This finally shows the convergence of the second component of $ S_{ n}$ and hence we get that $ S_{ n} \stackrel{ P}{ \to } S.$ 

Next we have to convert this result to that of convergence of the random sets $ \mathcal{S}_{ n} $. This argument is similar to the one used to prove  Lemma 2.1.3 in \cite{das2008qpr}. Choose any subsequence $ (m_{ n})$ of integers. Since $ S_{ n}(\cdot)\stackrel{ P}{\longrightarrow  }  S(\cdot)$ we have $ S_{ m_{ n}}(\cdot) \stackrel{ P}{\longrightarrow  }S(\cdot) $ in $ D^{ 2}_{ l}(0,1]$. So there exists a subsequence $ (l_{ n})$ of $( m_{ n})$ such that $ S_{ l_{ n}}(\cdot)\to S(\cdot) $ a.s. Now the final step is to use this to prove \eqref{eq:almostsure} and for that we will use Lemma \ref{lem:setconv} . Take any point in $ \mathcal{S}$ of the form $ (t,\xi/(1-\xi)t)$ for some $ t\ge 1$. Set $ u=t^{ -\xi}$ and observe that $ S_{ l_{ n}}(u)\to (t,\xi/(1-\xi)t)$ and $ S_{ l_{ n}}(u)\in \mathcal{S}_{ l_{ n}} $.  This proves condition $ (1)$ of Lemma \ref{lem:setconv} and we next prove condition $ (2)$. Suppose for some subsequence $ (j_{ n})$ of $ (l_{ n})$, $ S_{ j_{ n}}(u_{ n})$ converges to $ (x,y)$. Since $ S^{ (1)}(u)$ is strictly monotone we get that the must be some $ 0<u\le 1$ such that $ u_{ n}\to u$ as  $ n\to \infty$. Now, since $ S_{ j_{ n}}\to S$ and $ S$ is a continuous we get that $ S_{ j_{ n}}(u_{ n})\to S(u)\in \mathcal{S}$. That completes the proof.
\end{proof}

\subsubsection{Case  $  \xi\geq 1$; limit sets are random.}
The following theorem describes the asymptotic behavior of 
the ME plot  when $ \xi\ge1$. Reminder: $\xi>1$  guarantees an infinite mean.

\begin{theorem}\label{thm:xi:pos:gtr1} 
Assume $ (X_{ n},n\ge 1)$ are i.i.d. observations with distribution $ F$ satisfying $ \bar F\in RV_{ -1/\xi }$:
\begin{enumerate}[(i)]
\item If  $ \xi>1$, then 
 \begin{equation}\label{eq:lim:pos:gtr1} 
 	\mathcal{S}_{ n}:= \left\{   \left(\frac{X_{ (i)}}{b(n/k)},\frac{\hat M(X_{ (i)})}{b(n)/k}\right) :i=2,\ldots, k \right\}   \Longrightarrow \mathcal{S}:=\Big\{   \Big(t^{ \xi},  tS_{ 1/\xi} \Big):t\ge 1  \Big\} 
\end{equation}
in $ \mathcal{F}$, where $ b(n):= F^{ \leftarrow }\big( 1-1/n \big) $ and $ S_{ 1/\xi}$ is the positive  stable random variable with index $ 1/\xi$ which satisfies for $ t\in \mathbb{R}$
\begin{equation}\label{eq:stable} 
 	E\big[  e^{ itS_{ 1/\xi}} \big]   =
	\exp \Big\{ -\Gamma \Big( 1- \frac{ 1}{\xi } \Big)\cos  \frac{ \pi}{2\xi }\big| t \big|^{ 1/\xi}\Big[  1-i\sgn(t) \tan \frac{ \pi}{2\xi } \Big]   \Big\} 
\end{equation}
\item If $ \xi=1$  and $ k$ satisfies $k=k(n)\to\infty$, $k/n
  \to 0$, and 
\begin{equation}\label{eqn:beesGoCrazy}
kb(n/k)/b(n)\to 1\quad (n\to\infty),
\end{equation} then 
 \begin{align}
 	\mathcal{S}_{ n}:=& \left\{   \left(\frac{X_{
                (i)}}{b(n/k)},\frac{\hat M(X_{ (i)})}{b(n/k)}- \frac{k
              C_{ n,k}}{ib(n) }\right) :i=2,\ldots, k
        \right\}\nonumber\\
      &  \Longrightarrow \mathcal{S}:=\Big\{   t\Big(1,  S_{ 1}-1-\log
        t \Big):t\ge 1  \Big\}  \label{eq:lim:pos:eql1} 
\end{align}
in $ \mathcal{F}$, where 
\[ 
 	C_{ n,k} = n\big( E[X_{ 1}I_{ X_{ 1}\le b(n)]}]- E[X_{ 1}I_{ X_{ 1}\le b(n/k)]}]\big)
\]
and $ S_{ 1}$ is a positively skewed stable random variable satisfying
\[ 
 	E\big[  e^{ itS_{ 1}} \big]     =\exp \Big\{  i t\int_{ 0}^{ \infty} \Big( \frac{ \sin x}{ x^{ 2}}- \frac{ 1}{x(1+x) } \Big)dx -  | t | \Big[  \frac{\pi}{2}+i\sgn(t) \log | t |  \Big] \Big\}.
\]
\end{enumerate}

\end{theorem}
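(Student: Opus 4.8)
The plan is to follow the architecture of the proof of Theorem \ref{thm:positive:xi}: first establish a functional limit theorem for the process $S_n(\cdot)$ whose range is $\mathcal{S}_n$, then upgrade this to convergence of random closed sets in $\mathcal{F}$. The decisive new feature when $\xi\ge 1$ is that the limit is \emph{random}, so convergence is weak ($\Rightarrow$) and the summation functional $T(\hat\nu_n)$ from the proof of Theorem \ref{thm:positive:xi} no longer converges to a deterministic function, since $\int^\infty x^{-1/\xi}\,dx=\infty$. Writing $i=\lceil ku\rceil$ and $T_j:=\sum_{l=1}^j X_{(l)}$, the starting point in both cases is the identity $\hat M(X_{(i)})=T_{i-1}/(i-1)-X_{(i)}$, so that
\[
\frac{\hat M(X_{(i)})}{b(n)/k}=\frac{k}{i-1}\cdot\frac{T_{i-1}}{b(n)}-\frac{kb(n/k)}{b(n)}\cdot\frac{X_{(i)}}{b(n/k)} .
\]
The first coordinate $X_{(\lceil ku\rceil)}/b(n/k)\to u^{-\xi}$ is already known from Theorem \ref{thm:positive:xi}, and $k/(i-1)\to u^{-1}$ is deterministic, so everything reduces to the behaviour of the truncated partial sums $T_{\lceil ku\rceil}/b(n)$.

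For $\xi>1$ I would obtain the stable limit through point-process convergence: $N_n:=\sum_{i=1}^n\delta_{X_i/b(n)}\Rightarrow N$, a PRM on $(0,\infty]$ with mean measure $\nu(x,\infty]=x^{-1/\xi}$, whose points may be represented as $\Gamma_l^{-\xi}$ with $\Gamma_l$ the arrival times of a unit-rate Poisson process \citep{resnick2006htp,resnick:1987}. Since $1/\xi<1$, the summation functional $m\mapsto\int_{0+}^\infty x\,m(dx)$ is a.s.\ finite and a.s.\ continuous at $N$, giving $\sum_i X_i/b(n)\Rightarrow\sum_l\Gamma_l^{-\xi}\stackrel{d}{=}S_{1/\xi}$. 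Because $X_{(\lceil ku\rceil)}/b(n)\to 0$ for each fixed $u$ and the mass below threshold is asymptotically negligible --- the representative estimate being $k(1-u)\,b(n/(ku))/b(n)\asymp(1-u)u^{-\xi}k^{1-\xi}\to0$ --- the truncated sums $T_{\lceil ku\rceil}/b(n)$ all converge to the \emph{same} constant-in-$u$ limit $S_{1/\xi}$, and the joint convergence $\bigl(X_{(\lceil ku\rceil)}/b(n/k),\,T_{\lceil ku\rceil}/b(n)\bigr)\Rightarrow(u^{-\xi},S_{1/\xi})$ holds on $(0,1]$. Since $kb(n/k)/b(n)\to0$ for $\xi>1$, the subtracted term in the identity above vanishes, and continuous mapping yields $S_n(\cdot)\Rightarrow(u^{-\xi},u^{-1}S_{1/\xi})$, whose range is exactly $\{(t^\xi,tS_{1/\xi}):t\ge1\}$ under $t=u^{-1}$.

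To pass from this functional limit to convergence of $\mathcal{S}_n$ in $\mathcal{F}$, I would argue as in Theorem \ref{thm:positive:xi} but with weak convergence in place of convergence in probability: invoke Skorokhod's representation theorem to realise $S_n\to S$ almost surely on a common probability space, then verify the two conditions of Lemma \ref{lem:setconv} pathwise, using the strict monotonicity of the first coordinate $u\mapsto u^{-\xi}$ (which forces any convergent subsequence $u_n$ to have a limit $u\in(0,1]$) together with the continuity of the limit map. This gives $\mathcal{S}_n\to\mathcal{S}$ a.s.\ in the representation, hence $\mathcal{S}_n\Rightarrow\mathcal{S}$.

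The case $\xi=1$ is the principal obstacle. Now $1/\xi=1$ and the partial sums require centering: the term $kC_{n,k}/(ib(n))$ subtracted in \eqref{eq:lim:pos:eql1} is precisely what makes $(T_{\lceil ku\rceil}-C_{n,k})/b(n)$ converge. The hard part will be the asymptotic bookkeeping: using \eqref{eqn:beesGoCrazy} to replace $kb(n/k)$ by $b(n)$ with negligible error in the supremum norm on $(0,1]$, and applying Karamata-type estimates to the truncated means $n\,E[X\mathbbm{1}_{b(n/(ku))<X\le b(n)}]$. At the critical index these estimates generate a logarithmic dependence on the truncation level $b(n/(ku))$, which is the source of the $-\log t$ in the limit set; the $-1$ (that is, $-u^{-1}$) comes, as before, from the $-X_{(i)}$ term of $\hat M$, which no longer vanishes because $X_{(\lceil ku\rceil)}/b(n/k)\to u^{-1}$. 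One then identifies the centered limit as the skewed $1$-stable law $S_1$ by reading its characteristic function off the Lévy--Khintchine representation attached to $\bar F\in RV_{-1}$, obtaining $S_n(\cdot)\Rightarrow u^{-1}(1,\,S_1-1+\log u)$; the random-set conclusion then follows by the same Skorokhod-plus-Lemma \ref{lem:setconv} argument used for $\xi>1$.
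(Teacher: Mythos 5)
Your overall architecture coincides with the paper's: first a functional limit theorem for $Y_n(t)=\bigl(X_{(\lceil k/t\rceil)}/b(n/k),\,\hat M(X_{(\lceil k/t\rceil)})/(b(n)/k)\bigr)$ in $D^2[1,\infty)$, built on the same decomposition $\hat M(X_{(i)})=T_{i-1}/(i-1)-X_{(i)}$, then an upgrade to Fell convergence of the plots. Your upgrade step (Skorokhod representation plus a pathwise verification of Lemma \ref{lem:setconv}, using strict monotonicity of the first coordinate) is a legitimate variant of the paper's, which instead combines Skorokhod representation with the sup-functional criterion of Proposition 6.10 in \cite{molchanov:2005}; both work, and your uniform approximation of the partial-sum process by the single variable $T_k/b(n)$ on $[u_0,1]$ is a clean substitute for the paper's explicit modulus-of-continuity computation. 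Where you genuinely diverge is in how the stable limits for the trimmed sums $T_{\lceil ku\rceil}=\sum_{l\le\lceil ku\rceil}X_{(l)}$ are obtained: the paper takes these as its key input, quoting \cite{csorgo1986ads} for both parts --- $b(n)^{-1}\sum_{i=1}^{k_n}X_{(i)}\Rightarrow S_{1/\xi}$ when $\xi>1$, and $b(n)^{-1}\bigl(\sum_{i=1}^{k_n}X_{(i)}-C_{n,k}\bigr)\Rightarrow S_1$ when $\xi=1$ --- whereas you propose to derive them from full-sum theory via point processes. This is where the gaps are.

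For $\xi>1$ your route can be completed, but the stated negligibility estimate is insufficient: the bound $k(1-u)b(n/(ku))/b(n)\to 0$ only controls $T_k-T_{\lceil ku\rceil}$, i.e., ranks between $\lceil ku\rceil$ and $k$, while your summation-functional argument delivers the limit of the \emph{full} sum $\sum_{i=1}^n X_i/b(n)$. To bridge the two you must also discard the $n-k$ observations below $X_{(k)}$, and there the crude bound (number of terms times largest term) is of order $n\,b(n/k)/b(n)$, which need not vanish (take $k=\log n$). What is needed is Karamata's theorem for truncated means, $nE\bigl[X_1 I_{[X_1\le b(n/k)]}\bigr]\sim \tfrac{1/\xi}{1-1/\xi}\,k\,b(n/k)=o(b(n))$, together with a routine argument replacing the random threshold $X_{(k)}$ by $b(n/k)$; this is standard, so the gap is fixable. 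The gap in part (ii) is more serious: the whole difficulty of that case is precisely the claim that the \emph{trimmed} sum, centered by $C_{n,k}$, converges to $S_1$, and you do not prove it. ``Reading the characteristic function off the L\'evy--Khintchine representation'' applies to full sums of iid summands, not to $\sum_{i=1}^k X_{(i)}$; at the critical index the full sum itself requires centering, and transferring from the full-sum centering to $C_{n,k}$ requires showing $b(n)^{-1}\bigl(\sum_{l>k}X_{(l)}-nE[X_1I_{[X_1\le b(n/k)]}]\bigr)\stackrel{P}{\to}0$ with a random trimming level --- bookkeeping you explicitly defer (``the hard part''). The paper avoids all of this by citing the trimmed-sum result of \cite{csorgo1986ads}. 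Your remaining $\xi=1$ steps do match the paper's: the $-\log t$ is the between-threshold mass, which the paper obtains as $b(n)^{-1}\sum_{i=\lceil k/t\rceil}^{k}X_{(i)}\stackrel{P}{\to}\log t$ via the tail empirical measure, and the $-t$ comes from $-X_{(\lceil k/t\rceil)}/b(n/k)$ under \eqref{eqn:beesGoCrazy}.
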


\begin{remark}\label{rem:xi:gtr1} 
In Theorem \ref{thm:positive:xi} we considered the points of the mean
excess plot normalized by $ X_{ (k)}$. By scaling both coordinates by
the same normalizing sequence, we did not change the structure of the
plot. But in Theorem \ref{thm:xi:pos:gtr1}\emph{(i)}  we need
different scaling in the two coordinates.  This is simple to observe
since $ b(n)=F^{ \leftarrow }(1-n^{ -1})\in RV_{ \xi}$ and $ \xi>1$
implies $ kb(n/k)/b(n)\to 0$ as $ n\to \infty$. This means that in
order to get a finite limit we need to normalize the second coordinate
by a sequence increasing at a much faster rate than the normalizing
sequence for the first coordinate. This is indeed changing the
structure of the plot and even with this normalization the
limiting set is random.
 The limit is a curve scaled in the
second coordinate by the random quantity $ S_{ 1/\xi}$. 
Note that the limit is independent of the choice of the
sequence $ k_{ n}$ as long as it satisfies the condition that $ k_{
  n}\to \infty$ and $ k_{ n}/n\to 0$ as $ n\to \infty$. Another interesting outcome, as pointed out by a referee, is that in the log-log scale the limit set becomes
  \[ 
 	\log \mathcal{S}= \{  ( u, \frac{ 1}{\xi } u+ \log S_{ 1/\xi}):u\ge 0 \}   
\]
which is a straight line with slope $ 1/\xi$ and a random intercept term $ S_{ 1/\xi}$.

In Theorem \ref{thm:xi:pos:gtr1}\emph{(ii)} along with $ \xi=1$ we
make the extra assumption \eqref{eqn:beesGoCrazy}.  Under these
assumptions we get that mean excess plot with some centering in the
second coordinate converges to a random set. {We could obtain
result without \eqref{eqn:beesGoCrazy} but then the centering becomes
random and more complicated and difficult to interpret. The
significance of \eqref{eqn:beesGoCrazy} is as follows: The centering
$C_{n,k}$ is of the form
$$C_{n,k}=n\bigl(\pi (n)-\pi(n/k)\bigr)$$ where
$\pi(t)=\int_0^{b(t)} \bar F(s) ds$ is in the de Haan class $\Pi$ and
has slowly varying auxiliary function $g(t):=b(t)/t$; see 
\cite{resnick2006htp}, \cite{bingham:goldie:teugels:1989}, \cite{DeHaan:1976p6219}  and \cite{dehaan:ferreira:2006}.  Condition
\eqref{eqn:beesGoCrazy}  is the same as requiring $k$ to satisfy
$g(n/k)/g(n) \to 1.$}
\end{remark}

\begin{proof}
\emph{(i)} We will first  prove that 
\begin{equation}\label{eq:conv:func}
	 Y_{ n}(t):= \Bigg( \frac{ X_{( \lceil k/t \rceil)}}{ b(n/k)}, \frac{ \hat M\big( X_{( \lceil k/t \rceil)} \big) }{b(n)/k } \Bigg)  \Longrightarrow Y(t):=\big( t^{ \xi},tS_{ 1/\xi} \big) \ \ \ \ \mbox{ in } D^{ 2}[1,\infty).
\end{equation}
The two important facts that we will need for the proof are the following:
\begin{enumerate}[(A)] 
        \item  \cite{csorgo1986ads}  showed that for any $ k_{ n}\to \infty$ satisfying $ k_{ n}/n \to 0$ 
\[
 	\frac{ 1}{b(n) }\sum_{ i=1}^{ k_{ n}} X_{ (i)} \Longrightarrow S_{ 1/\xi}, \ \ \ \ \mbox{ in }\mathbb{R}.
\]
\item Under the same assumption on the sequence $ k_{ n}$
  \citep[p.82]{resnick2006htp}  
\begin{equation}\label{eq:xieql1:orderstat} 
 	Y^{ (1)}_{ n}   (t)= \frac{ X_{( \lceil k/t \rceil)}}{ b(n/k)} \stackrel{ P}{ \longrightarrow  } Y^{ (1)}(t)= t^{ \xi} \ \ \ \ \mbox{ in } D[1, \infty).
\end{equation}
\end{enumerate} 

Since $ Y^{ (1)}(t)$ is non-random,  in order to prove
\eqref{eq:conv:func} it suffices to show that $ Y_{ n}^{
  (2)}(t)\Longrightarrow Y^{ (2)}(t)$ in $ D[1,\infty)$  
\citep[Proposition 3.1, p.57]{resnick2006htp}. By Theorem 16.7
in \cite[p.174]{billingsley1999cpm}  we need to show that $ Y_{ n}^{
  (2)}(t)\Longrightarrow Y^{ (2)}(t)$ in $ D[1,N]$ for every $ N> 1$. So
fix $ N>1$ arbitrarily. By an abuse of notation we will use $ Y$ and $
Y_{ n}$ as to denote their restrictions on $ [1,N]$ as elements of $
D[1,N]$.  
 
 Observe that $ b(n)\in RV_{ \xi}$ and since $ \xi>1$ we get $ kb(n/k)/b(n)\to 0$ as $ n\to \infty$. Combining this  with (B)  we get that for any $ t\ge 1$,
\begin{equation}\label{eq:one:dim}
	\frac{ kX_{ (\lceil k/t \rceil)}}{ b(n) } \stackrel{ P}{ \longrightarrow  } 0. 
\end{equation}
Also observe that for any $ 1\le t_{ 1}<t_{ 2}\le N$
\begin{equation}\label{eq:lim:diff}
	\frac{ 1}{ b_{ n}} \sum_{ i=\lceil k/t_{ 2} \rceil+1}^{ \lceil k/t_{ 1} \rceil}X_{ (i)}\le k \left(\frac{ 1}{t_{ 1} }- \frac{ 1}{t_{ 2} }+1 \right) \frac{ X_{ (\lceil k/t_{ 2} \rceil)}}{b(n) } \stackrel{ P}{\longrightarrow  } 0. 
\end{equation}

Using (A), \eqref{eq:one:dim}, \eqref{eq:lim:diff} and Proposition 3.1 in \cite[p.57]{resnick2006htp} we get that for any $ 1\le t_{ 1}<t_{ 2}\le N$
\begin{equation}\label{eq:two:dim:skeleton}
	\frac{ 1}{b(n) }\Big( \sum_{ i=1}^{ \lceil k/t_{ 2} \rceil-1}X_{ (i)},\sum_{ i=\lceil k/t_{ 2} \rceil}^{ \lceil k/t_{ 1} \rceil-1}X_{ (i)},k X_{ (\lceil k/t_{ 2} \rceil)},kX_{ (\lceil k/t_{ 1} \rceil)} \Big) \Longrightarrow  \big( S_{ 1/\xi},0,0,0 \big) .
\end{equation}
This allows us to obtain the weak limit of $ (Y_{ n}^{ (2)}(t_{ 1}),Y_{ n}^{ (2)}(t_{ 2}))$:
\begin{align*}
&(Y_{ n}^{ (2)}(t_{ 1}),Y_{ n}^{ (2)}(t_{ 2})) \\
&=  \frac{ k}{ b(n)} \Big( \hat M \big( X_{ (\lceil k/t_{ 1} \rceil)} \big), \hat M \big( X_{ (\lceil k/t_{ 2} \rceil)} \big) \Big) \\
&= \frac{ k}{ b(n)} \Bigg( \frac{ 1}{\lceil k/t_{ 1} \rceil -1 } \sum_{ i=1}^{ \lceil k/t_{ 1} \rceil -1} X_{ (i)}-X_{ (\lceil k/t_{ 1} \rceil)},\frac{ 1}{\lceil k/t_{ 2} \rceil -1 } \sum_{ i=1}^{ \lceil k/t_{ 2} \rceil -1} X_{ (i)}-X_{ (\lceil k/t_{ 2} \rceil)}        \Bigg)\\
&= \frac{1}{b(n)} \Bigg( \frac{ k  \sum\limits_{ i=1}^{ \lceil k/t_{ 2} \rceil -1} X_{ (i)} }{\lceil k/t_{ 1} \rceil -1 }     +    \frac{ k  \sum\limits_{ i= \lceil k/t_{ 2} \rceil}^{ \lceil k/t_{ 1} \rceil -1} X_{ (i)} }{\lceil k/t_{ 1} \rceil -1 }   -k X_{ (\lceil k/t_{ 1} \rceil)}  ,   \frac{ k  \sum\limits_{ i=1}^{ \lceil k/t_{ 2} \rceil -1} X_{ (i)} }{\lceil k/t_{ 2} \rceil -1 } - k X_{ (\lceil k/t_{ 2} \rceil)}  \Bigg)\\
&\Longrightarrow  (t_{ 1},t_{ 2})S_{ 1/\xi}.
\end{align*}
By similar arguments we can also show that for any $ 1\le t_{ 1}<t_{ 2}<\cdots <t_{ m}\le N$, 
\[
	 (Y_{ n}^{ (2)}(t_{ 1}),\ldots,Y^{ (2)}_{ n}(t_{ m}))\Longrightarrow (t_{ 1},\cdots,t_{ m})S_{ 1/\xi}.
\]
From  \cite{billingsley1999cpm}, Theorem 13.3, p.141, the proof of
\eqref{eq:conv:func} will be complete if we show  for any $
\epsilon >0$ 
\[ 
 	\lim_{ \delta  \rightarrow 0  } \lim_{ n \rightarrow \infty  } P \big[  w_{ N}(Y^{ (2)}_{ n},\delta )\ge \epsilon  \big]  =0, 
\]
where for any $ g\in D[1,N]$
\[ 
 	   w_{ N}(g,\delta )= \sup_{ 1\le t_{ 1}\le t\le t_{ 2}\le N,t_{ 2}-t_{ 1}\le \delta }\big\{ \big| g(t)-g(t_{ 1}) \big| \wedge \big| g(t_{ 2})-g(t) \big|   \big\} .
\]
Fix any $ \epsilon >0$ and choose $ n$ large enough such that $ X_{ (k)}>0$ and $ k/N>1$. Then for any $ 1\le t_{ 1}\le t_{ 2}\le N$
\begin{align*}
& \Big| Y_{ n}^{ (2)}(t_{ 2})-Y_{ n}^{ (2)}(t_{ 1}) \Big| \\
& = \frac{ 1}{b(n) }\Bigg|  \frac{ k}{ \lceil k/t_{ 2} \rceil-1 }\sum_{ i=1}^{ \lceil k/t_{ 2} \rceil-1}X_{ (i)} -kX_{ (\lceil k/t_{ 2} \rceil)}-\frac{ k}{ \lceil k/t_{ 1} \rceil-1 }\sum_{ i=1}^{ \lceil k/t_{ 1} \rceil-1}X_{ (i)} +kX_{ (\lceil k/t_{ 1} \rceil)}\Bigg|\\
& \le \frac{ 1}{ b(n)}   \Bigg(\frac{ k}{ \lceil k/t_{ 2} \rceil-1 }-\frac{ k}{ \lceil k/t_{ 1} \rceil-1 }  \Bigg)  \sum_{ i=1}^{ \lceil k/N \rceil-1} X_{ (i)}\\
& \ \ \ \ + \frac{ 1}{ b(n)} \Bigg(  \frac{ k}{ \lceil k/t_{ 2} \rceil-1 }\sum_{ i=\lceil k/N \rceil}^{ \lceil k/t_{ 2} \rceil-1}X_{ (i)} +kX_{ (\lceil k/t_{ 2} \rceil)} + \frac{ k}{ \lceil k/t_{ 1} \rceil-1 }\sum_{ i=\lceil k/N \rceil}^{ \lceil k/t_{ 1} \rceil-1}X_{ (i)} +kX_{ (\lceil k/t_{ 1} \rceil)}  \Bigg)\\
& \le  \frac{ 1}{ b(n)}   \Bigg(\frac{ k}{ \lceil k/t_{ 2} \rceil-1 }-\frac{ k}{ \lceil k/t_{ 1} \rceil-1 }  \Bigg)  \sum_{ i=1}^{ \lceil k/N \rceil-1} X_{ (i)} + \frac{ 4kN}{b(n) } X_{ (\lceil k/N \rceil)}\\
&=: U_{ n,N}(t_{ 1},t_{ 2}) \Longrightarrow (t_{ 2}-t_{ 1}) S_{ 1/\xi}.
\end{align*}
Therefore, {using the form of the function $ U_{ n,N}$ we get}
\begin{align*}
& \lim_{ \delta  \rightarrow 0  } \lim_{ n \rightarrow \infty  } P \big[  w_{ N}(Y_{ n}^{ (2)},\delta )\ge \epsilon  \big]\\
& \le  \lim_{ \delta  \rightarrow 0  } \lim_{ n \rightarrow \infty  } P \Big[  \sup_{ 1\le t_{ 1}\le t_{ 2}\le N,t_{ 2}-t_{ 1}\le \delta } \Big| Y_{ n}^{ (2)}(t_{ 2})-Y_{ n}^{ (2)}(t_{ 1}) \Big|\ge \epsilon   \Big] \\
& \le \lim_{ \delta  \rightarrow 0  } \lim_{ n \rightarrow \infty  } P \Big[    \sup_{ 1\le t_{ 1}\le t_{ 2}\le N,t_{ 2}-t_{ 1}\le \delta } U_{ n,N}(t_{ 1},t_{ 2})  >\epsilon \Big]\\
&= \lim_{ \delta  \rightarrow 0  } P\Big[  \delta S_{ 1/\xi}\ge \epsilon  \Big]=0. 
\end{align*}
Hence we have proved \eqref{eq:conv:func}.

Now we prove the statement of the theorem. By Proposition 6.10, page 87 in
\cite{molchanov:2005} it suffices to show that for any
continuous function  $ f:\mathbb{R}^{ 2}\mapsto \mathbb{R}_{ +}$ with
a compact support 
\[ 
 	\lim_{ n \rightarrow \infty  }E\Big[  \sup_{ x\in \mathcal{S}_{ n}} f(x) \Big]    = E\Big[  \sup_{ x\in \mathcal{S}} f(x) \Big]  .
\]
Suppose $ f:\mathbb{R}^{ 2}\mapsto \mathbb{R}_{ +}$ is a continuous function with compact support. By the Skorokhod representation theorem (see Theorem 6.7  in \cite[p.70]{billingsley1999cpm}) there exists a probability space $ (\Omega ,\mathcal{G},P)$ and random elements $ Y_{ n}^{ *}(t)$ and $ Y^{ *}(t)$ in $ D[1, \infty)$ such that $ Y_{ n}\stackrel{ d}{= }Y_{ n}^{ *} $ and $ Y\stackrel{ d}{= } Y^{ *} $ and $ Y_{ n}^{ *}(t)(\omega ) \to Y^{* }(t)(\omega ) $ in $ D[1,\infty)$ for every $ \omega \in \Omega $. Now observe that 
\[ 
 	\sup_{ x\in \mathcal{S}}f(x) \stackrel{ d}{= } \sup_{ t\ge 1} f\big(Y^{ *}(t)\big) \ \ \ \ \mbox{ and }    \ \ \ \  \sup_{ x\in \mathcal{S}_{ n}}f(x) \stackrel{ d}{= } \sup_{ t\ge 1} f\big(Y_{ n}^{ *}(t)\big).
\] 
Since $ f$ is continuous we get 
\[
	 \sup_{ t\ge 1} f\big(Y_{ n}^{ *}(t)\big) \to \sup_{ t\ge 1} f\big(Y^{ *}(t)\big)\ \ \ \ P-a.s.
\] 
and since $ f$ is bounded we apply the dominated convergence theorem to get 
\[ 
 	   \lim_{ n \rightarrow \infty  }E\Big[  \sup_{ x\in \mathcal{S}_{ n}} f(x) \Big]= \lim_{ n \rightarrow \infty}E \Big[  \sup_{ t\ge 1} f\big(Y_{ n}^{ *}(t)\big)  \Big]   = E \Big[  \sup_{ t\ge 1} f\big(Y^{ *}(t)\big)  \Big] =E\Big[  \sup_{ x\in \mathcal{S}} f(x) \Big]
\]
and that completes the proof of the theorem when $ \xi>1$.

\emph{(ii)} Similar to the proof of part \emph{(i)} we will first
prove that 
in  $D^{ 2}[1,\infty),$
\begin{equation}\label{eq:conv:func:xieql1}
	 Y_{ n}(t):= \Bigg( \frac{ X_{( \lceil k_{ n}/t \rceil)}}{
           b(n/k)}, \frac{ \hat M\big( X_{( \lceil k_{ n}/t \rceil)}
           \big) }{b(n/k) }- \frac{ k}{\lceil k/t\rceil }\frac{ C_{ n,k}}{b(n) } \Bigg)  \Longrightarrow Y(t):=t\big(1 ,S_{ 1}-1-\log t \big) 
\end{equation}
We will use the following facts:
\begin{enumerate}[(A)] 
        \item  \cite{csorgo1986ads}  showed that for any $ k_{ n}\to \infty$ satisfying $ k_{ n}/n \to 0$ 
\[
 	\frac{ 1}{b(n) }\Bigg(\sum_{ i=1}^{ k_{ n}} X_{ (i)}-C_{ n,k}\Bigg) \Longrightarrow S_{1}, \ \ \ \ \mbox{ in }\mathbb{R}.
\]
\item For $k\to \infty$ with $k/n\to 0$, \eqref{eq:xieql1:orderstat}
  still holds with $\xi=1$.
\end{enumerate} 
By the same arguments used in part \emph(i) it suffices to prove that for any arbitrary $ N> 1$
\[ 
 	Y^{ (2)}_{ n}(t) \Longrightarrow Y^{ (2)}(t) \ \ \ \ \mbox{ in } D[1,N]  .
\]
Observe that from \eqref{eq:xieql1:orderstat}  and the assumption that $ kb(n/k)/b(n)\to 1$  we  get for any $ t>1$
\begin{equation}\label{eq:xieql1:midpart}
	\frac{ 1}{ b(n)}\sum_{i= \lceil k/t\rceil }^{ k}X_{ (i)} = \frac{(1+o(1))
          }{b(n/k) k } \sum_{ i=\lceil k/t\rceil}^{ k}X_{ (i)}
        \stackrel{ P}{\longrightarrow  } \log t.  
\end{equation}
The reason for this is that
$$
\frac 1k \sum_{ i=\lceil k/t \rceil}^{ k} \frac{X_{ (i)}}{b(n/k)}
=\int_{X_{(k)}/b(n/k)}^{ X_{(\lceil k/t \rceil)}/b(n/k)} x\nu_n(dx)
\stackrel{P}{\to} \int_1^t x x^{-2} dx =\log t
$$
where $\nu_n (dx) =\frac 1k \sum_{i=1}^n \delta_{X_{(i)}/b(n/k)}
(dx) \to x^{-2}dx.$ 
See \eqref{eq:measure.weak} 
and  \eqref{eq:xieql1:orderstat}.
Now fix any $1\le t\le N  $ and note that 
\begin{align*}
&Y_{ n}^{ (2)}(t) =   \frac{\hat M \big( X_{ (\lceil k/t \rceil)}
  \big)}{b(n/k)}- \frac{kC_{ n,k}}{\lceil k/t\rceil b(n)} \\
&= \frac{ 1}{ kb(n/k)} \Bigg( \frac{ k}{\lceil k/t \rceil -1 } \sum_{
  i=1}^{ \lceil k/t \rceil -1} X_{ (i)}\Bigg)-\frac{X_{ (\lceil k/t
    \rceil)}}{b(n/k)}-  \frac{kC_{ n,k}}{\lceil k/t\rceil b(n)}\\
& =(1+o_p(1))\frac{ t}{ b(n)} \Bigg( \sum_{ i=1}^{ k} X_{ i} -C_{ n,k}
\Bigg) -  \frac{X_{ (\lceil k/t \rceil)}}{b(n/k)} -  \frac{ (1+o(1))t}{b(n) }
\sum_{ i=\lceil k/t \rceil}^{ k} X_{ (i)}\\
&\Longrightarrow tS_{ 1} -t-t\log t.
\end{align*}
We complete the proof using the same arguments as those in part \emph{(i)}.
\end{proof}

\end{subsection}

\begin{subsection}{Negative Slope}\label{sec:neg:xi}

The case when $ \xi<0$ is characterized by the following theorem which is a combination of Theorems 3.3.12 and 3.4.13(b) in \cite{embrechtskluppelbergmikosch:1997}:
\begin{theorem}\label{thm:char:neg:xi}
If $ \xi<0$ then the following are equivalent for a distribution function $ F$:
\begin{enumerate} 
        \item  $ F$ has a finite right end point $ x_{ F}$ and $ \bar F(x_{ F}-x^{ -1})\in RV_{ 1/\xi}$.
        \item $ F$ is in the maximal domain of attraction of a Weibull distribution with parameter $ -1/\xi$, i.e.,
        \[ 
 	   F^{ n}\big(  x_{ F}-c_{ n}x \big)\to \exp\{   -(-x)^{ -1/\xi}  \} \ \ \ \ \mbox{ for all } x\le 0,
\]
where $ c_{ n}=x_{ F}-F^{ \leftarrow }(1-n^{ -1})$.
\item There exists a measurable function $ \beta (u)$ such that 
\[ 
 	\lim_{ u \rightarrow x_{ F}  } \sup_{ u\le x\le x_{ F}} \big| F_{ u}(x)-G_{ \xi,\beta (u)}(x) \big|=0.   
\]
\end{enumerate} 
\end{theorem}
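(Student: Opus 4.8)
The plan is to reduce the negative-$\xi$, finite-endpoint (Weibull) situation to the positive-$\xi$ (Fr\'echet) case already settled in Theorem \ref{thm:char:pos:xi}, via the increasing bijection $\phi(x)=(x_F-x)^{-1}$, which maps $(-\infty,x_F)$ onto $(0,\infty)$ and pushes the right endpoint $x_F$ out to $+\infty$. If $X\sim F$ has finite right endpoint $x_F$, set $Y:=\phi(X)=(x_F-X)^{-1}$ and let $H$ be its distribution function. Then $\bar H(y)=P[(x_F-X)^{-1}>y]=\bar F(x_F-y^{-1})$, so condition (1) of Theorem \ref{thm:char:neg:xi} is literally the assertion $\bar H\in RV_{1/\xi}$. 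Writing $\tilde\xi:=-\xi>0$ we have $1/\xi=-1/\tilde\xi$, hence $\bar H\in RV_{-1/\tilde\xi}$, which is exactly condition (1) of Theorem \ref{thm:char:pos:xi} applied to $H$ with shape $\tilde\xi$. The theorem then follows once (2) and (3) are translated back from their counterparts for $H$.

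For $(1)\Leftrightarrow(2)$ I would use that, $\phi$ being increasing, the sample maxima satisfy $M_n^Y=\phi(M_n^X)=(x_F-M_n^X)^{-1}$, so $x_F-M_n^X=(M_n^Y)^{-1}$. By Theorem \ref{thm:char:pos:xi}, $\bar H\in RV_{-1/\tilde\xi}$ is equivalent to $M_n^Y/\tilde c_n$ converging to a Fr\'echet law $\Phi_{1/\tilde\xi}=\Phi_{-1/\xi}$, where $\tilde c_n=H^\leftarrow(1-n^{-1})=(x_F-F^\leftarrow(1-n^{-1}))^{-1}=c_n^{-1}$. Applying the continuous map $z\mapsto -z^{-1}$ together with the convergence to types theorem \citep{resnick:1987} converts this into convergence of $(M_n^X-x_F)/c_n$ to the law of $-Z^{-1}$ with $Z\sim\Phi_{-1/\xi}$; a direct computation gives $P[-Z^{-1}\le w]=\exp\{-(-w)^{-1/\xi}\}$ for $w\le 0$, i.e.\ the Weibull limit, with norming constant $c_n=x_F-F^\leftarrow(1-n^{-1})$. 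This is precisely (2), and the argument reverses along the same map.

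For $(1)\Rightarrow(3)$ I would proceed directly, mirroring the Karamata/de Haan computation carried out after Theorem \ref{thm:char:pos:xi} \citep{bingham:goldie:teugels:1989}. Put $g(x):=\bar F(x_F-x^{-1})\in RV_{1/\xi}$ and $s:=(x_F-u)^{-1}\to\infty$ as $u\to x_F$. For $0\le x<x_F-u$ the excess tail is
\[
\bar F_u(x)=\frac{\bar F(u+x)}{\bar F(u)}=\frac{g\big(s/(1-sx)\big)}{g(s)} .
\]
With the scale $\beta(u):=-\xi(x_F-u)>0$ one has $s=-\xi/\beta(u)$, so $1-sx=1+\xi x/\beta(u)$, and the uniform convergence theorem for regularly varying functions gives $g(s\lambda)/g(s)\to\lambda^{1/\xi}$ with $\lambda=(1+\xi x/\beta(u))^{-1}$, whence $\bar F_u(x)\to(1+\xi x/\beta(u))^{-1/\xi}=\bar G_{\xi,\beta(u)}(x)$. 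The GPD support endpoint $-\beta(u)/\xi=x_F-u$ matches the range of the excess variable, as required. The converse $(3)\Rightarrow(1)$ is the remaining half of Pickands--Balkema--de Haan: uniform closeness of $F_u$ to a GPD of fixed shape $\xi$ forces $\bar F(x_F-\cdot^{-1})$ into $RV_{1/\xi}$, which I would extract by evaluating the excess relation at two points and invoking a convergence-to-types argument, or simply cite Theorem 3.4.13(b) of \citep{embrechtskluppelbergmikosch:1997}.

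The main obstacle is the uniformity in (3) all the way up to the now \emph{finite} endpoint. The uniform convergence theorem only delivers uniformity for $\lambda$ in compact subsets of $(0,\infty)$, i.e.\ for excesses bounded away from $x_F-u$; in the boundary layer near $x_F$ both $\bar F_u$ and the GPD tail vanish, so controlling the supremum there needs an extra monotonicity or Potter-bound estimate rather than the transformation itself, which is the routine part. Setting $x=(x_F-u)w$ with $w\in[0,1)$ reduces the supremum to one over $w$, after which the $w\uparrow 1$ regime is handled separately by the tail estimate.
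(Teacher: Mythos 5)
Your proposal is correct in substance, but note the key point of comparison: the paper does not prove this theorem at all. It is stated as ``a combination of Theorems 3.3.12 and 3.4.13(b) in \cite{embrechtskluppelbergmikosch:1997}'', exactly as Theorem \ref{thm:char:pos:xi} is handled for $\xi>0$, so any genuine argument is automatically a different route. Yours is essentially the classical textbook reduction of the Weibull case to the Fr\'echet case through $\phi(x)=(x_F-x)^{-1}$ (this is also how the cited source derives its Theorem 3.3.12), and the computations check out: $\bar H(y)=\bar F(x_F-y^{-1})$, $H^\leftarrow(1-n^{-1})=c_n^{-1}$, and the map $z\mapsto -z^{-1}$ carries the Fr\'echet limit into $\exp\{-(-w)^{-1/\xi}\}$, $w\le 0$. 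Three remarks. First, your derivation actually yields $F^n(x_F+c_nx)\to\exp\{-(-x)^{-1/\xi}\}$ for $x\le 0$; the paper's display in (2), with $x_F-c_nx$, contains a sign slip (for $x<0$ its left-hand side is identically $1$), so your claim that your limit ``is precisely (2)'' is right about the intended statement rather than the literal one --- your version is the correct one. Second, the uniformity issue you flag in (3) is real but is already settled in the literature you cite: since $g\in RV_{1/\xi}$ with $1/\xi<0$, the uniform convergence theorem holds on half-lines $[a,\infty)$ and not merely on compacta (Theorem 1.5.2 in \cite{bingham:goldie:teugels:1989}, proved by precisely the Potter-bound/monotonicity argument you sketch), so with your substitution $x=(x_F-u)w$, $\lambda=(1-w)^{-1}\in[1,\infty)$, the supremum over the entire support is controlled in one stroke. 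Third, deferring $(3)\Rightarrow(1)$ to a citation of Theorem 3.4.13(b) is consistent with the paper's own level of rigor, since the paper cites that same result for the whole theorem. What your route buys is an explicit logical reduction of the $\xi<0$ case to Theorem \ref{thm:char:pos:xi}, making the structure of the equivalences transparent; what the paper's citation buys is economy, these being classical results that are not part of the paper's contribution.
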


 Here we again get a characterization of this class of distributions
 in terms of the behavior of the maxima of iid  random variables and
 the excess distribution. Using Theorem \ref{thm:char:neg:xi}(1) and
 Karamata's Theorem 
 \citep[Theorem 1.5.11, p.28]{bingham:goldie:teugels:1989} we get that $ M(u)/(x_{
   F}-u)\sim \xi/(\xi-1)$ as $ u\to x_{ F}$.  We show that this
 behavior is observed empirically. The Pickands-Balkema-de
 Haan Theorem, part (3) of Theorem \ref{thm:char:neg:xi}, does
 not explicitly construct  the scale parameter $ \beta (u)$ but
 as in Remark \ref{rem:normalisation} one
 can show that $ \beta (u)/(x_{ F}-u)\to -\xi$ as $ u \to x_{ F}$.  
 
\begin{theorem}\label{thm:negative:xi} 
 Suppose $ (X_{ n},n\ge 1)$ are iid random variables with distribution $ F$ which has a finite right end point $ x_{ F}$ and satisfies $ 1-F(x_{ F}-x^{ -1})\in RV_{ 1/\xi}$ as $ x\to \infty$ for some $ \xi<0$. Then 
 \begin{align*} 
 	\mathcal{S}_{ n}&:= \frac{ 1}{X_{ (1) } -X_{ (k)}}\left\{ \Big(X_{ (i)}-X_{ (k)} , \hat M(X_{ (i)})  \Big):1<i\le k \right\} \\
	&\stackrel{ P}{\longrightarrow  }   \mathcal{S}:=\Big\{  \Big( t, \frac{ \xi}{1-\xi }(t-1)  \Big):0\le t\le 1    \Big\}
\end{align*}
in $ \mathcal{F}$.
\end{theorem}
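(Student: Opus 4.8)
The plan is to follow the architecture of the proof of Theorem~\ref{thm:positive:xi}, transported to the finite right endpoint by working with the reflected variables $W_i := x_F - X_i \ge 0$, whose order statistics are $W_{(i)} = x_F - X_{(i)}$ with $W_{(1)} \le W_{(2)} \le \cdots$. Writing $\gamma := -1/\xi > 0$, the hypothesis $1-F(x_F - x^{-1}) \in RV_{1/\xi}$ says exactly that the law of $W_1$ is regularly varying at the origin, $P[W_1 \le y] = \bar F\big((x_F - y)-\big) \in RV_{\gamma}$ as $y \downarrow 0$. As in Theorem~\ref{thm:positive:xi} I would first invoke Lemma~\ref{lem:convprob} to reduce the asserted convergence in probability of the random sets to almost sure convergence of $\mathcal S_n$ to $\mathcal S$ along a further subsequence of any given subsequence. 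The analytic engine is then the tail empirical measure, the Weibull analogue of \eqref{eq:tailempirical},
\[
\hat\nu_n := \frac 1k \sum_{i=1}^n \delta_{(x_F - X_i)/(x_F - X_{(k)})} \Longrightarrow \nu \quad \text{in } M_+[0,\infty), \qquad \nu[0,x] = x^{\gamma} = x^{-1/\xi},
\]
which follows from the regular variation of $W_1$ at $0$ (see \cite{resnick2006htp}); here $\nu(dx) = \gamma x^{\gamma-1}\,dx$. From the induced quantile convergence I record the companion limit, the negative-$\xi$ form of the order-statistic result used on p.~82 of \cite{resnick2006htp},
\[
\frac{x_F - X_{(\lceil ku\rceil)}}{x_F - X_{(k)}} = \frac{W_{(\lceil ku\rceil)}}{W_{(k)}} \stackrel{P}{\longrightarrow} u^{-\xi} \quad \text{in } D_l(0,1].
\]
In particular $W_{(1)}/W_{(k)} \stackrel{P}{\to} 0$, so $(X_{(1)} - X_{(k)})/(x_F - X_{(k)}) \stackrel{P}{\to} 1$; this lets me normalise throughout by $x_F - X_{(k)}$ and recover the stated normaliser $X_{(1)} - X_{(k)}$ by a Slutsky argument at the end.

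Next I would introduce the two-coordinate process $S_n(u) := \big((X_{(\lceil ku\rceil)} - X_{(k)})/(X_{(1)} - X_{(k)}),\ \hat M(X_{(\lceil ku\rceil)})/(X_{(1)} - X_{(k)})\big)$ in $D_l^2(0,1]$ and show $S_n \stackrel{P}{\to} S$ with $S(u) = \big(1 - u^{-\xi},\ \tfrac{\xi}{\xi-1}\,u^{-\xi}\big)$. The first coordinate is immediate from the displayed order-statistic limit. For the second I use that exactly $i-1$ observations exceed $X_{(i)}$, which gives the identity
\[
\hat M(X_{(i)}) = \frac{1}{i-1}\sum_{l=1}^{i-1}\big(X_{(l)} - X_{(i)}\big) = W_{(i)} - \frac{1}{i-1}\sum_{l=1}^{i-1} W_{(l)},
\]
so that after dividing by $x_F - X_{(k)} = W_{(k)}$,
\[
\frac{\hat M(X_{(\lceil ku\rceil)})}{W_{(k)}} = \frac{W_{(\lceil ku\rceil)}}{W_{(k)}} - \frac{k}{\lceil ku\rceil - 1}\int_{[0,\, W_{(\lceil ku\rceil)}/W_{(k)})} x\,\hat\nu_n(dx).
\]
Passing to the limit, the integral functional evaluated at $\hat\nu_n$ converges to $\int_0^{u^{-\xi}} x\,\nu(dx) = \tfrac{\gamma}{\gamma+1}u^{(\gamma+1)/\gamma}$, and multiplying by $k/(\lceil ku\rceil - 1) \to u^{-1}$ yields $\tfrac{\gamma}{\gamma+1}u^{-\xi}$; hence the second coordinate converges to $u^{-\xi}\big(1 - \tfrac{\gamma}{\gamma+1}\big) = \tfrac{1}{\gamma+1}u^{-\xi} = \tfrac{\xi}{\xi-1}u^{-\xi}$. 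Setting $t = 1 - u^{-\xi}$ and using $\tfrac{\xi}{\xi-1}u^{-\xi} = \tfrac{\xi}{1-\xi}(t-1)$ identifies the limit curve with $\mathcal S$.

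The passage from functional to set convergence is then verbatim as in Theorem~\ref{thm:positive:xi}: along an almost surely convergent subsequence I would verify the two conditions of Lemma~\ref{lem:setconv}, using that $u \mapsto S^{(1)}(u) = 1 - u^{-\xi}$ is continuous and strictly increasing on $(0,1]$ (so any convergent sequence of parameters has a limit and the associated points land in $\mathcal S$) and that every point $\big(t, \tfrac{\xi}{1-\xi}(t-1)\big)$ of $\mathcal S$ is recovered as $\lim_n S_n(u)$ at $u = (1-t)^{\gamma}$.

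I expect the main obstacle to be the joint functional convergence of the second coordinate, and within it two delicate points. First, one must justify that the integral $\int_{[0,\,\cdot)} x\,\hat\nu_n(dx)$ may be pushed through the weak limit \emph{uniformly} on $(0,1]$ — the counterpart of the $T_K$ versus $T$ truncation in Theorem~\ref{thm:positive:xi}, the troublesome regime now being $u \downarrow 0$ rather than the tail at infinity, since there $\lceil ku\rceil - 1$ is small and dividing by it amplifies the fluctuations of the partial sums. Second, the estimate $W_{(1)}/W_{(k)} \stackrel{P}{\to} 0$ must be upgraded to enough uniform control that replacing the normaliser $X_{(1)} - X_{(k)}$ by $x_F - X_{(k)}$ does not disturb the set limit near the endpoint $t = 1$, which is exactly the part of $\mathcal S$ fed by the very highest order statistics.
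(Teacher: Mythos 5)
Your proposal is correct and takes essentially the same route as the paper's own proof: reflect to $W_i = x_F - X_i$, establish convergence of the tail empirical measure $\hat\nu_n$, represent $\hat M(X_{(\lceil ku\rceil)})$ as an integral against $\hat\nu_n$, prove convergence of the two-coordinate process in $D_l^2(0,1]$, and pass to Fell convergence via Lemma~\ref{lem:setconv} and Lemma~\ref{lem:convprob}, with the ratio $(x_F - X_{(k)})/(X_{(1)}-X_{(k)}) \stackrel{P}{\to} 1$ reconciling the two normalizers. The only cosmetic differences are that you integrate $x\,\hat\nu_n(dx)$ where the paper integrates the distribution function $\hat\nu_n[0,x)\,dx$ (equivalent by Fubini), and one trivial slip: $S^{(1)}(u)=1-u^{-\xi}$ is strictly \emph{decreasing} on $(0,1]$, not increasing, though the set-convergence argument only uses strict monotonicity.
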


\begin{remark}\label{rem:norm:negxi} 
As in  Subsection \ref{subsec:posxi} we look at a modified version of the mean excess plot. Here we scale and relocate the points of the plot near the right end point.  We may interpret this result as 
\[ 
 	\{   (X_{ (i)},\hat M(X_{ (i)})):1<i\le k  \}   \approx \big(X_{ (k)},0\big)+\big(X_{ (1)}-X_{ (k)}  \big)\mathcal{S} 
\]
where $ \mathcal{S}= \Big\{  \Big( t, \frac{ \xi}{1-\xi }(t-1)  \Big):0\le t\le 1    \Big\}$.
\end{remark}

\begin{proof} 
 The proof is similar to  that of Theorem \ref{thm:positive:xi}. From
Theorem 5.3(ii), p. 139 in \cite{resnick2006htp} we get 
 \[ 
 	\nu_{ n}:= \frac{ 1}{ k}\sum_{ i=1}^{ n} \delta_{ \frac{ x_{ F}-X_{ i}}{c_{ \lceil n/k\rceil} }}    \Longrightarrow \nu \ \ \ \ \mbox{ in } M_{ +}[0,\infty)
\]
 where $ \nu[0,x)=x^{ -1/\xi}$ for all $ x\ge 0$ and 
$c_n=F^\leftarrow (1-n^{-1}).$  Following the arguments used in the proof of Theorem 4.2 in \cite[p.81]{resnick2006htp}  we also get 
 \begin{equation}\label{eq:1}
 	\hat\nu_{ n}:= \frac{ 1}{ k}\sum_{ i=1}^{ n} \delta_{ \frac{ x_{ F}-X_{ i}}{x_{ F}-X_{ (k)} }}    \Longrightarrow \nu \ \ \ \ \mbox{ in } M_{ +}[0,\infty).
\end{equation}
Here we can represent $ \hat M(X_{( \lceil ku \rceil)})$ in terms of the empirical measure as
\[ 
 	\hat M(X_{( \lceil ku \rceil)})= \frac{ k(x_{ F}-X_{ (k)})}{\lceil ku\rceil-1 } \int_{ 0}^{ \frac{ x_{ F}-X_{ (\lceil ku\rceil)}}{ x_{ F}-X_{ (k)}}} \hat \nu_{ n} [0,x)dx   
\]
and taking the same route as in Theorem \ref{thm:positive:xi} we get 
\[ 
 	S_{ n}(u)= \Big( \frac{ x_{ F}-X_{ (\lceil k u \rceil)}}{x_{ F}-X_{ (k)} }, \frac{\hat M(X_{(\lceil ku\rceil) }) }{x_{ F}-X_{ (k)}} \Big)   \stackrel{ P}{\longrightarrow }  S(u)=    \Big(u^{ -\xi}, \frac{ \xi}{\xi-1 } u^{ -\xi} \Big)
\]
 in $ D^{ 2}_{ l}(0,1]$. From this we get in the Fell topology
 \[ 
 	\Big\{ \Big( \frac{ x_{ F}-X_{ (i)}}{ x_{ F}-X_{ (k)}}, \frac{ \hat M(X_{ (i)})}{x_{ F}-X_{ (k)} } \Big) :1\le i\le k   \Big \}   \stackrel{ P}{ \longrightarrow} \Big\{ \Big( t, \frac{ \xi}{\xi-1 }t \Big):0\le t\le 1  \Big\} 
\]
Finally, using the fact that 
\[ 
 	\frac{ X_{ (1)}-X_{ (k)}}{  x_{ F}-X_{ (k)}  }\stackrel{ P}{\longrightarrow  }1,    
\]
and the identity
\begin{align*} 
 	\frac{ 1}{X_{ (1) } -X_{ (k)}}&\left\{ \Big(X_{ (i)}-X_{ (k)} , \hat M(X_{ (i)})  \Big):1<i\le k \right\}\\
	= &\frac{ x_{ F}-X_{ (k)}}{X_{ (1) } -X_{ (k)}}\left\{ \Big(\frac{X_{ (i)}-X_{ (k)}}{x_{ F}-X_{ (k)}} , \frac{\hat M(X_{ (i)})}{x_{ F}-X_{ (k)}}  \Big):1<i\le k \right\}   \\
	 =&   \frac{ x_{ F}-X_{ (k)}}{X_{ (1) } -X_{ (k)}}\left\{ \Big(1-\frac{x_{ F}-X_{ (i)}}{x_{ F}-X_{ (k)}} , \frac{\hat M(X_{ (i)})}{x_{ F}-X_{ (k)}}  \Big):1<i\le k \right\}
\end{align*}
we get the final result.
\end{proof}

\end{subsection}

\begin{subsection}{Zero Slope}\label{sec:xi:zero}
The next result follows from Theorems 3.3.26 and 3.4.13(b) in \cite{embrechtskluppelbergmikosch:1997} and Proposition 1.4 in \cite{resnick:1987}.

\begin{theorem}\label{thm:char:zero:xi}
The following conditions are equivalent for a distribution function $ F$ with right end point $ x_{ F}\le \infty$: 
 \begin{enumerate} 
        \item  There exists  $ z<x_{ F}$ such that $ F$ has a representation 
        \begin{equation}\label{eq:char:F:gumbel} 
 	\bar F(x)=c(x)\exp \Big\{ -\int_{ z}^{ x} \frac{ 1}{a(t) }dt \Big\} , \ \ \ \ \mbox{ for all } z<x<x_{ F},   
\end{equation}
where $ c(x)$ is a  measurable function satisfying $ c(x)\to c>0$,  $ x\rightarrow x_{ F}$, and $ a(x)$ is a positive, absolutely continuous function with density $ a^{ \prime}(x)\to 0$ as $ x\to x_{ F}$.
\item $ F$ is in the maximal domain of attraction of the Gumbel distribution, i.e., 
\[ 
 	F^{ n}\big( c_{ n}x +d_{ n}\big)\to \exp \big\{-e^{ -x}  \big\}   \ \ \ \ \mbox{ for all }  x\in \mathbb{R},
\]
where $d_{ n}=F^{ \leftarrow }(1-n^{ -1}) $ and $ c_{ n}=a(d_{ n})$.
\item There exists a measurable function $ \beta (u)$ such that 
\[ 
 	\lim_{ u \rightarrow x_{ F}  }\sup_{ u\le x\le x_{ F}} \big| F_{ u}(x)-G_{ 0,\beta (u)}(x) \big|    =0.
\]
\end{enumerate}
\end{theorem}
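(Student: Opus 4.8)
The plan is to read Theorem~\ref{thm:char:zero:xi} as an assembly of three classical characterizations of the Gumbel maximal domain of attraction, organized with the representation~\eqref{eq:char:F:gumbel} in $(1)$ as the hub: I would prove $(1)\Rightarrow(2)$ and $(1)\Rightarrow(3)$ by direct computation, and recover $(1)$ from $(2)$ or from $(3)$ by invoking de Haan's theory. The one analytic fact driving both forward implications is that $a'(x)\to 0$ makes $a$ \emph{self-neglecting}. Indeed, since $a$ is absolutely continuous,
\[
a\big(x+t\,a(x)\big)-a(x)=\int_x^{x+t\,a(x)}a'(s)\,ds,
\]
and bounding $|a'|$ by a small constant near $x_F$ gives
\[
\lim_{x\to x_F}\frac{a\big(x+t\,a(x)\big)}{a(x)}=1
\]
locally uniformly in $t$. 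This is essentially the only estimate the forward directions need.

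For $(1)\Rightarrow(2)$, put $d_n=F^\leftarrow(1-n^{-1})$ and $c_n=a(d_n)$. The representation~\eqref{eq:char:F:gumbel} gives
\[
\frac{\bar F(d_n+c_nx)}{\bar F(d_n)}=\frac{c(d_n+c_nx)}{c(d_n)}\exp\Big\{-\int_{d_n}^{d_n+c_nx}\frac{dt}{a(t)}\Big\},
\]
and the substitution $t=d_n+s\,a(d_n)$ rewrites the integral as $\int_0^x a(d_n)/a\big(d_n+s\,a(d_n)\big)\,ds\to x$ by the self-neglecting property, while $c(d_n+c_nx)/c(d_n)\to 1$ because $c(\cdot)\to c>0$. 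Since $\bar F(d_n)\sim n^{-1}$, this yields $n\bar F(d_n+c_nx)\to e^{-x}$, and using $\log F=\log(1-\bar F)\sim-\bar F$ we get $F^n(c_nx+d_n)=\exp\{n\log F(c_nx+d_n)\}\to\exp\{-e^{-x}\}$, which is $(2)$.

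For $(1)\Rightarrow(3)$, take $\beta(u)=a(u)$. The identical computation, now with $u\to x_F$ in place of $d_n$, shows
\[
\frac{\bar F\big(u+\beta(u)x\big)}{\bar F(u)}=1-F_u\big(\beta(u)x\big)\longrightarrow e^{-x}=1-G_{0,1}(x)
\]
pointwise, which is the rescaled form of $(3)$ since $G_{0,\beta(u)}(\beta(u)x)=G_{0,1}(x)$. The only additional point is uniformity over the full range of excesses: both $x\mapsto\bar F(u+\beta(u)x)/\bar F(u)$ and the limit $e^{-x}$ are monotone and the limit is continuous, so pointwise convergence upgrades to uniform convergence by a Dini--P\'olya argument, and undoing the scaling returns the stated supremum over $u\le x\le x_F$.

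The genuinely hard direction, and the main obstacle, is the reverse: manufacturing the representation~\eqref{eq:char:F:gumbel} and the property $a'\to 0$ out of the weaker information in $(2)$ or $(3)$. The forward computation cannot simply be run backwards; one must use de Haan's theory of $\Pi$-variation and the class $\Gamma$ to construct the auxiliary function $a$, and then check that the scale forced by the excess limit in $(3)$ and the norming $c_n=a(d_n)$ in $(2)$ can be taken equal to this same $a$. Rather than redevelop that machinery I would cite \cite[Theorem~3.3.26]{embrechtskluppelbergmikosch:1997} for $(1)\Leftrightarrow(2)$, the Pickands--Balkema--de Haan equivalence \cite[Theorem~3.4.13(b)]{embrechtskluppelbergmikosch:1997} for $(1)\Leftrightarrow(3)$, and \cite[Proposition~1.4]{resnick:1987} to pin down the constants $d_n=F^\leftarrow(1-n^{-1})$ and $c_n=a(d_n)$, leaving only the bookkeeping that the auxiliary function, the domain-of-attraction norming, and the excess scale $\beta(u)$ are mutually consistent.
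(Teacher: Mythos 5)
Your proposal is correct and takes essentially the same route as the paper: the paper gives no independent proof, deducing the theorem entirely from Theorems 3.3.26 and 3.4.13(b) of \cite{embrechtskluppelbergmikosch:1997} together with Proposition 1.4 of \cite{resnick:1987} --- exactly the results you invoke for the converse directions and the identification of the norming constants $d_n=F^{\leftarrow}(1-n^{-1})$, $c_n=a(d_n)$, $\beta(u)=a(u)$. The direct computations you add for $(1)\Rightarrow(2)$ and $(1)\Rightarrow(3)$ via the self-neglecting property of $a$ are sound, but they are detail the paper leaves buried inside those citations.
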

Theorem 3.3.26 in \cite{embrechtskluppelbergmikosch:1997} also says that a possible choice of the auxiliary function $ a(x)$ in \eqref{eq:char:F:gumbel} is
\[ 
 	a(x)   =\int_{ x}^{ x_{ F}} \frac{ \bar F(t)}{ \bar F(x) } dt \ \ \ \ \mbox{ for all }x<x_{ F},
\]
and for this choice, the auxiliary function is  the ME function, i.e., $ a(x)=M(x)$. Furthermore, we also know that $ a^{ \prime }(x)\to 0$ as $ x\to x_{ F}$ and this  implies that $ M(u)/u\to 0$ as $ u\to x_{ F}$. 

A prime example in this class is the exponential distribution for which the ME function is a constant. The domain of attraction of the Gumbel distribution is  a very big class including distributions as diverse as the normal and the log-normal. It is indexed by  auxiliary functions  which only  need to satisfy $ a^{ \prime }(x)\to 0$ as $ x\to x_{ F}$. Since $ M(x)$ is a choice for the auxiliary function $ a(x)$, the class of ME functions corresponding to the domain of attraction of the Gumbel distribution is very large. 

\begin{theorem}\label{thm:zero:xi} 
  Suppose $ (X_{ n},n\ge 1)$ are iid random variables with distribution $ F$ which satisfies any one of the conditions in Theorem \ref{thm:char:zero:xi}. Then in $ \mathcal{F}$,
 \[ 
 	\mathcal{S}_{ n}:=\frac{ 1}{X_{ (\lceil k/2 \rceil)}-X_{ (k)} }\left\{\Big( X_{ (i)}-X_{ (k)}  , \hat M(X_{ (i)})  \Big):1<i\le k\right\} \stackrel{ P}{\longrightarrow  } \mathcal{S}:=\Big\{  \big( t, 1 \big):t\ge 0    \Big\}.
\]

\end{theorem}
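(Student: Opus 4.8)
The plan is to follow the template of the proof of Theorem \ref{thm:positive:xi}, replacing the pure scaling appropriate to regular variation with the location--scale normalization appropriate to the Gumbel domain. By Lemma \ref{lem:convprob} it suffices to exhibit, along every subsequence, a further subsequence for which $\mathcal{S}_n \to \mathcal{S}$ almost surely. I would parametrize the plot by $u\in(0,1]$, set $D_n := X_{(\lceil k/2\rceil)} - X_{(k)}$, and study
\[
S_n(u) := \Big(\frac{X_{(\lceil ku\rceil)} - X_{(k)}}{D_n},\ \frac{\hat M(X_{(\lceil ku\rceil)})}{D_n}\Big),\qquad u\in(0,1],
\]
as a random element of $D^2_l(0,1]$, proving $S_n \stackrel{P}{\to} S$ for a deterministic continuous $S$ whose image is $\mathcal{S}$. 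The passage from this functional convergence to random-set convergence is then verbatim the Lemma \ref{lem:setconv} argument that closes the proof of Theorem \ref{thm:positive:xi}, since the first coordinate of $S$ is strictly monotone and $S$ is continuous.

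The Gumbel analogue of \eqref{eq:measure.weak} is the key input. Writing $b(t)=F^{\leftarrow}(1-1/t)$ and letting $g(t)=a(b(t))$ be the slowly varying ($\Pi$-)auxiliary function attached to $b$ via Theorem \ref{thm:char:zero:xi}(1), the standard tail-empirical result (see \cite[Ch.~5]{resnick2006htp}) gives, with deterministic centering and scaling,
\[
\frac1k\sum_{i=1}^n \delta_{(X_i-b(n/k))/g(n/k)} \Rightarrow \nu,\qquad \nu(x,\infty]=e^{-x},\ x\in\mathbb{R},
\]
in $M_+(-\infty,\infty]$, together with the centered order-statistic convergence $(X_{(\lceil ku\rceil)}-b(n/k))/g(n/k)\stackrel{P}{\to}-\log u$. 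I would transfer this to the self-normalized measure $\hat\nu_n := \tfrac1k\sum_{i=1}^n \delta_{(X_i-X_{(k)})/D_n}$ using $(X_{(k)}-b(n/k))/g(n/k)\stackrel{P}{\to}0$ and $D_n/g(n/k)\stackrel{P}{\to}\log 2$; a convergence-to-types argument then yields $\hat\nu_n\Rightarrow\hat\nu$ with $\hat\nu(x,\infty]=\nu(x\log 2,\infty]$, and in particular $(X_{(\lceil ku\rceil)}-X_{(k)})/D_n\stackrel{P}{\to}S^{(1)}(u):=-\log u/\log 2$, which already sweeps out $[0,\infty)$ as $u$ ranges over $(0,1]$.

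For the second coordinate I would write the empirical mean excess as a functional of $\hat\nu_n$ exactly as before: with $z_n(u):=(X_{(\lceil ku\rceil)}-X_{(k)})/D_n$,
\[
\frac{\hat M(X_{(\lceil ku\rceil)})}{D_n}=\frac{k}{\lceil ku\rceil-1}\int_{z_n(u)}^{\infty}\hat\nu_n(x,\infty]\,dx .
\]
The maps $T(\mu)(t)=\int_t^\infty \mu(x,\infty]\,dx$ and its truncation $T_K$ are handled precisely as in Theorem \ref{thm:positive:xi}: $T_K$ is continuous, and the tail estimate $\lim_{K\to\infty}\limsup_{n\to\infty}P[\int_K^\infty\hat\nu_n(x,\infty]\,dx>\epsilon]=0$ now follows from the exponential integrability of $\hat\nu(x,\infty]$, giving $T(\hat\nu_n)\stackrel{P}{\to}T(\hat\nu)$ in $D_l[0,\infty)$. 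Composing the joint convergence of $\big((X_{(\lceil k\cdot\rceil)}-X_{(k)})/D_n,\,T(\hat\nu_n)\big)$ with the continuous map $\tilde T(f,h)(u)=h(f(u))$ and using $k/(\lceil ku\rceil-1)\to 1/u$, the limiting integral $\tfrac1u\int_{S^{(1)}(u)}^{\infty}\hat\nu(x,\infty]\,dx$ is independent of $u$, namely the constant height of the half-line $\mathcal{S}$; hence $S_n\stackrel{P}{\to}S$ with $S(u)=(S^{(1)}(u),c)$.

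The continuous-mapping and truncation machinery is routine, being identical to Theorem \ref{thm:positive:xi}. The main obstacle is the self-normalization: in contrast to the $0<\xi<1$ case, where a single random scale $X_{(k)}$ multiplied both coordinates and left the limiting measure unchanged, here one must simultaneously recenter by $X_{(k)}$ and rescale by $D_n=X_{(\lceil k/2\rceil)}-X_{(k)}$, so passing from the deterministically normalized tail-empirical measure to $\hat\nu_n$ requires controlling the joint fluctuations of $X_{(k)}$ and $X_{(\lceil k/2\rceil)}$ against $b(n/k)$ and $g(n/k)$ and then invoking convergence to types. The second delicate point is uniform control of the integral functional in $u$, both as $u\to0$, where $z_n(u)\to\infty$, and as $u\to1$, where $z_n(u)\to0$; this is exactly what the truncation lemma together with the exponential tail of $\hat\nu$ supplies. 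Once these are secured, the conversion to Fell-topology convergence via Lemma \ref{lem:setconv} is immediate from the strict monotonicity and continuity of $S^{(1)}$.
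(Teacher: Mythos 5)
Your route is essentially the paper's: the paper likewise starts from the deterministically normalized Gumbel tail empirical measure (its centering and scaling constants $d_{\lceil n/k\rceil}$, $c_{\lceil n/k\rceil}$ are exactly your $b(n/k)$, $g(n/k)$), transfers to a version centered at $X_{(k)}$, represents $\hat M(X_{(\lceil ku\rceil)})$ as $\frac{k}{\lceil ku\rceil-1}$ times the integrated tail of that measure, and closes with the same truncation, composition, and Lemma \ref{lem:setconv} steps imported from the proof of Theorem \ref{thm:positive:xi}. The only structural difference is bookkeeping: the paper keeps the deterministic scale $c_{\lceil n/k\rceil}$ throughout, obtains $S_n(u)\stackrel{P}{\longrightarrow}(-\ln u,\,1)$ in $D^2_l(0,1]$, and only in its final line invokes $(X_{(\lceil k/2\rceil)}-X_{(k)})/c_{\lceil n/k\rceil}\stackrel{P}{\longrightarrow}\ln 2$; you normalize by $D_n=X_{(\lceil k/2\rceil)}-X_{(k)}$ from the outset, so the $\ln 2$ sits inside your limit measure $\hat\nu(x,\infty]=2^{-x}$.

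That difference exposes the one slip in your write-up, which you share with the paper. Under your normalization the limiting height is
\[
\frac{1}{u}\int_{-\log u/\log 2}^{\infty}2^{-x}\,dx \;=\; \frac{1}{u}\cdot\frac{u}{\ln 2}\;=\;\frac{1}{\ln 2},
\]
not $1$; the constant you assert to be ``the height of the half-line $\mathcal{S}$'' is in fact $1/\ln 2$, so what your argument actually proves is $\mathcal{S}_n\stackrel{P}{\longrightarrow}\{(t,1/\ln 2):t\ge 0\}$. The paper's own proof carries the identical unacknowledged factor: rescaling the height-one line by $c_{\lceil n/k\rceil}/D_n\stackrel{P}{\longrightarrow}1/\ln 2$ in its last step also produces the half-line at height $1/\ln 2$, so the theorem's statement and its proof disagree by exactly $\ln 2$ --- they would be reconciled by using $\ln 2/D_n$ as the normalization, or by taking the scale to be $X_{(\lceil k/e\rceil)}-X_{(k)}$. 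Since in every version the limit is a nonrandom horizontal half-line, the substance of your argument (and its diagnostic meaning for the ME plot) is sound; the lesson is to compute the constant rather than assert it matches the target.
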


\begin{proof} 
This is again similar to the proof of Theorem  \ref{thm:positive:xi}. Using Theorem \ref{thm:char:zero:xi}(2) we get 
\[ 
 	n    \bar F(c_{ n} x+d_{ n})\to e^{ -x} \ \ \ \ \mbox{ for all }x\in \mathbb{R}.
\]
Since $ n/k_{ n}\to \infty$ we also get 
\[ 
 	   \frac{ n}{k } \bar F \big( c_{ \lceil n/k \rceil}x+d_{ \lceil n/k \rceil} \big) \to e^{ -x} \ \ \ \ \mbox{ for all }x\in \mathbb{R}
\]
and then Theorem 5.3(ii) in \cite[p.139]{resnick2006htp} gives us
\[ 
 	\nu_{ n}:= \frac{ 1}{ k} \sum _{ i=1}^{ n}   \delta _{ \frac{ X_{ i}-d_{ \lceil n/k \rceil}}{ c_{ \lceil n/k \rceil}}} \Longrightarrow \nu \ \ \ \  \mbox{ in } M_{ +}(\mathbb{R})
\]
where $ \nu(x,\infty)=e^{ -x}$ for all $ x\in \mathbb{R}$. Following the arguments in the proof of Theorem 4.2 in \cite[p.81]{resnick2006htp} we get 
\[ 
 	\frac{ X_{ (k)}-d_{ \lceil n/k \rceil}}{ c_{ \lceil n/k \rceil}}\stackrel{ P}{\longrightarrow  }  0   
\]
and then
\[ 
 	\hat \nu_{ n} := \frac{ 1}{k } \sum_{ i=1}^{ n} \delta _{ \frac{ X_{ i}-X_{ (k)}}{ c_{ \lceil n/k \rceil}}} \Longrightarrow \nu    \ \ \ \ \mbox{ in } M_{ +}(\mathbb{R}).
\]
Now, one can easily establish the identity between the empirical mean excess function and the empirical measure
\[ 
 	\hat M (X_{ (\lceil ku \rceil)})   = \frac{ kc_{ \lceil n/k \rceil}}{ \lceil ku \rceil-1}\int_{ \frac{ X_{ (\lceil ku \rceil)}-X_{ (k)}}{ c_{ \lceil n/k \rceil}}}^{ \infty } \hat \nu_{ n}(x,\infty )dx.
\]
From this fact it follows that 
\[ 
 	S_{ n}(u)= \left( \frac{ X_{ (\lceil ku \rceil)}-X_{ (k)}}{ c_{ \lceil n/k \rceil} }, \frac{  \hat M(X_{ (\lceil ku \rceil)})  }{  c_{ \lceil n/k \rceil}} \right)    \stackrel{ P}{\longrightarrow  } S(u)= (-\ln u,1) 
\]
in $ D^{ 2}_{ l}(0,1]$ and that in turn implies
 \[ 
 	\Big\{ \Big( \frac{ X_{ (i)}-X_{ (k)}}{ c_{ \lceil n/k \rceil}}, \frac{ \hat M(X_{ (i)})}{c_{ \lceil n/k \rceil}} \Big) :1\le i\le k   \Big \}   \stackrel{ P}{ \longrightarrow} \Big\{ \big( t, 1\big):0\le t< \infty  \Big\} 
\]
Finally, using the fact that 
\[ 
 	\frac{ X_{ (\lceil k/2 \rceil)}-X_{ (k)}}{ c_{ \lceil n/k \rceil} }   \stackrel{ P}{ \longrightarrow  } \ln 2 
\]
we get the desired result.
\end{proof}

\end{subsection}

\end{section}

\begin{section}{Comparison with Other Methods of Extreme Value Analysis}\label{sec:exmethod}
For iid random variables from a distribution in the
maximal domain of attraction of the Frechet, Weibull or the Gumbel
distributions, Theorems \ref{thm:positive:xi}, \ref{thm:negative:xi} and
\ref{thm:zero:xi} describe the asymptotic behavior of the ME plot for
high thresholds.  {Linearity of the ME plot  for high order statistics
indicates there is no evidence against the hypothesis that
 the GPD model is a good fit for the thresholded data. }

{Furthermore, we obtain
a natural estimate $\hat \xi$ of $\xi$ by fitting a
 line to the linear part of the ME plot using least squares to get a
 slope estimate $\hat b$ and then
 recovering $\hat \xi= \hat b/(1+\hat b)$. Although natural, 
 convergence of the ME plot to a linear limit does not
 guarantee consistency of this estimate $\hat \xi $ and this is still
 under consideration.  Proposition 5.1.1 in
\cite{das2008qpr} explains why the slope of the least squares line is
not a continuous functional of  finite random sets.}

{ \cite{davison1990meo} give another method to estimate $\xi$.
They suggest a way to find a threshold
using the ME plot and then fit a GPD to the points above the
threshold using maximum likelihood estimation. For both this and the
LS method, the ME plot obviously plays a central role.
We analyze several  simulation and real data sets in Sections \ref{sec:simul} and
\ref{sec:data} using only the LS  method. }

{With any method, an important step is choice of threshold guided
  by the
ME plot so that the plot is {roughly}  linear above this
threshold. Threshold choice can be  challenging
and parameter estimates can be sensitive to the
threshold choice, especially when real data is analyzed.}

{The ME plot is only one of a suite of widely used tools}
 for extreme value model selection. Other
techniques are the Hill plot, the Pickands plot, the moment estimator
plot and the QQ plot; cf.  Chapter 4, \cite{resnick2006htp} and \cite{dehaan:ferreira:2006}.
{Some comparisons from the point of
view of asymptotic bias} and variance are  in
\cite{dehaan:peng:1998}. Here we review definitions and basic facts about
several methods assuming that $X_{ 1},\ldots ,X_{ n} $ is an iid
sample from a distribution in the maximal domain of attraction of an
extreme value distribution. The asymptotics require $k= k_{ n}$, the
number of upper order statistics used for estimation,  to be a
sequence increasing to $ \infty$ such that $ k_{ n}/n\to 0$. 

\begin{enumerate}[(1)] 
        \item  The Hill estimator {based on $m$ upper order statistics} is
\[ 
 	H_{ m,n}= \Big( \frac{ 1}{m }\sum_{ i=1}^{ m}\log \frac{ X_{
            (i)}}{X_{ (m+1)} } \Big)^{ -1} ,\qquad
 1\le m\le n.
\]
If $ \xi>0$ then  $H_{ k_{ n},n} \stackrel{ P}{\longrightarrow } \alpha = 1/\xi$. The Hill plot is the plot of the points $ \{(k,H_{ k,n}):1\le k\le n\}$.  

\item The Pickands estimator does not impose any restriction on the
  range of $ \xi$.   The Pickands estimator,
\[ 
 	\hat \xi_{ m,n}= \frac{ 1}{\log 2 }\log \Big( \frac{ X_{
            (m)}-X_{ (2m)}}{ X_{ (2m)}-X_{ (4m)}} \Big),\qquad 1\le m\le [n/4], 
\]
is consistent for $ \xi\in\mathbb{R}$; i.e., $ \hat \xi_{ k_{ n},n} \stackrel{ P}{\longrightarrow  } \xi  $ as $ n \to \infty$. The Pickands plot is the plot of the points $ \{(k,\hat \xi_{ m,n}),1\le m\le [n/4]\}$.
\item The QQ plot treats the case $ \xi>0$ and $ \xi<0$
  separately. When $ \xi>0$,
{the QQ plot consists of }  the points $ \mathcal{Q}_{ m,n}:=\{   \big(
 -\log(i/m),\log(X_{ (i)}/X_{ (m)}) \big):1\le i\le m   \}$ where $
 m<n$ is a suitably chosen integer. \cite{das2008qpr} showed  $
 \mathcal{Q}_{ k_{ n},n}\to\{   (t,\xi t):t\ge 0  \}$ in $ \mathcal{F}
 $ equipped with the Fell topology.
So the limit is a line with slope
 $ \xi$  and the LS estimator is consistent \citep{das2008qpr,  kratz1996}.

In the case when $ \xi<0$ then the QQ plot can be defined as the plot
of the points $ \mathcal{Q}^{ \prime }_{ m,n}:= \{   \big( X_{ (i)},
G^{ \leftarrow } _{ \hat \xi,1} (i/(n+1)) \big):1\le i\le n   \}$,
where $ \hat \xi$ is an estimate of $ \xi$ {based on $m$ upper
  order statistics.}

\item The moment estimator 
\citep{dekkers:einmahl:dehaan:1989, dehaan:ferreira:2006}
is another method which works for all $ \xi \in \mathbb{R}$ {and} is defined as 
\[ 
 	\hat \xi ^{ (moment)}_{ m,n}=H^{ (1)}_{ m,n}+1- \frac{ 1}{ 2} \Bigg( 1- \frac{ (H_{ m,n}^{ (1)})^{ 2}}{H_{ m,n}^{ 2} } \Bigg) ^{ -1} ,\qquad 1\le m\le n,
\]
where 
\[ 
 	H^{ (r)}_{ m,n}= \frac{ 1}{ m} \sum \Bigg( \log \frac{ X_{
            (i)}}{ X_{ (m+1)} } \Bigg)^{ r} , \qquad  r=1,2.
\]
The moment estimator plot is the plot of the points $ \{
(k,\hat\xi^{ (moment)}_{ {k},n}):1\le k\le n  \}$.
The moment estimator is consistent for $ \xi$.

\item To complete this survey, recall that the ME plot {converges to a
nonrandom  line}
when $ \xi<1$.
\end{enumerate}

{The Hill and QQ plots work best for $\xi>0$ and the ME plot requires
knowledge that $\xi<1$.  Each plot requires the data be properly
thresholded. The ME plot requires thresholding but also that $k$ be
sufficiently large that proper averaging takes place.}

\end{section}

\begin{section}{Simulation}\label{sec:simul}
{We divide this section into three subsections. In  subsection
  \ref{subsec:standard} we show simulation results for mean excess
  plot of some standard distributions with well-behaved tails.   In
  subsections \ref{subsub:diff} and \ref{subsub:infmean} we discuss
  simulation results of some distributions with either difficult
  tail-behavior or infinite mean.} 

\subsection{Standard Situations}\label{subsec:standard}
\subsubsection{Pareto distribution}
The obvious first choice for a distribution function to simulate from is
the GPD. For the GPD the ME plot should be {roughly} linear. We
simulate 50000 random variables from the Pareto(2) distribution. This
means that the parameters of the GPD are $ \xi=0.5$ and $ \beta =1$.
\begin{figure}[t]\label{fig:pareto2}
\centering
\caption{ME plot $\bigl\{ \bigl(X_{(i)} ,\hat M(X_{(i)})\bigr),1\leq i
  \leq 50000\bigr\}$  of 50000 random variables from Pareto(2) distribution $ (\xi=0.5)$. (a) Entire plot, (b) Order statistics 250-50000.}
\label{fig:pareto}
\includegraphics[width=12.5cm]{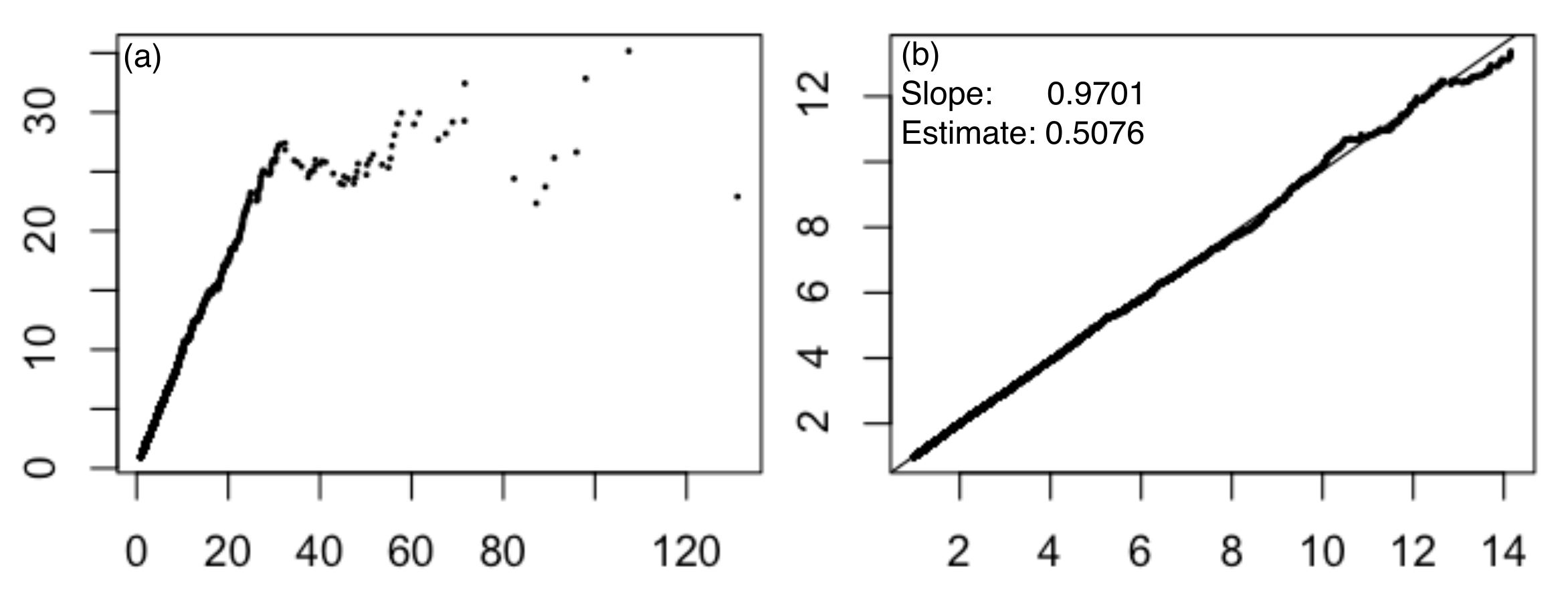}
\end{figure}
Figure \ref{fig:pareto} shows the mean excess plot for this data
set. Observe that in Figure \ref{fig:pareto}(a) the first part of the
plot is quite linear but it is scattered for very high order
statistics. The reason behind this phenomenon is that the empirical
mean excess function for high thresholds is the average of the
excesses of a small number of upper order statistics. When averaging
over few numbers, there is high variability and therefore, this part
of the plot appears very non-linear and is uninformative. In Figure
\ref{fig:pareto}(b) we zoom into the plot by leaving out the top 250
points. {We calculate   using all the data but  plot only  the points} $ \{(X_{
  (i)},\hat M(X_{ (i)})):250\le i\le 50000     \}$. This restricted
plot looks  linear. We fit a least squares line to this plot and the
estimate of the slope is $ 0.9701.$ Since the slope is $ \xi/(1-\xi)$
we get the estimate of $ \xi$ to be $ 0.5076.$ 
\begin{figure}[h]
\centering
\caption{ME plot  of 50000 random variables from totally right skewed Stable(1.5) distribution $ (\xi=2/3)$. (a) Entire plot, (b) Order statistics 120-30000, (c) 180-20000, (d) 270-10000.}
\includegraphics[width=12.6cm]{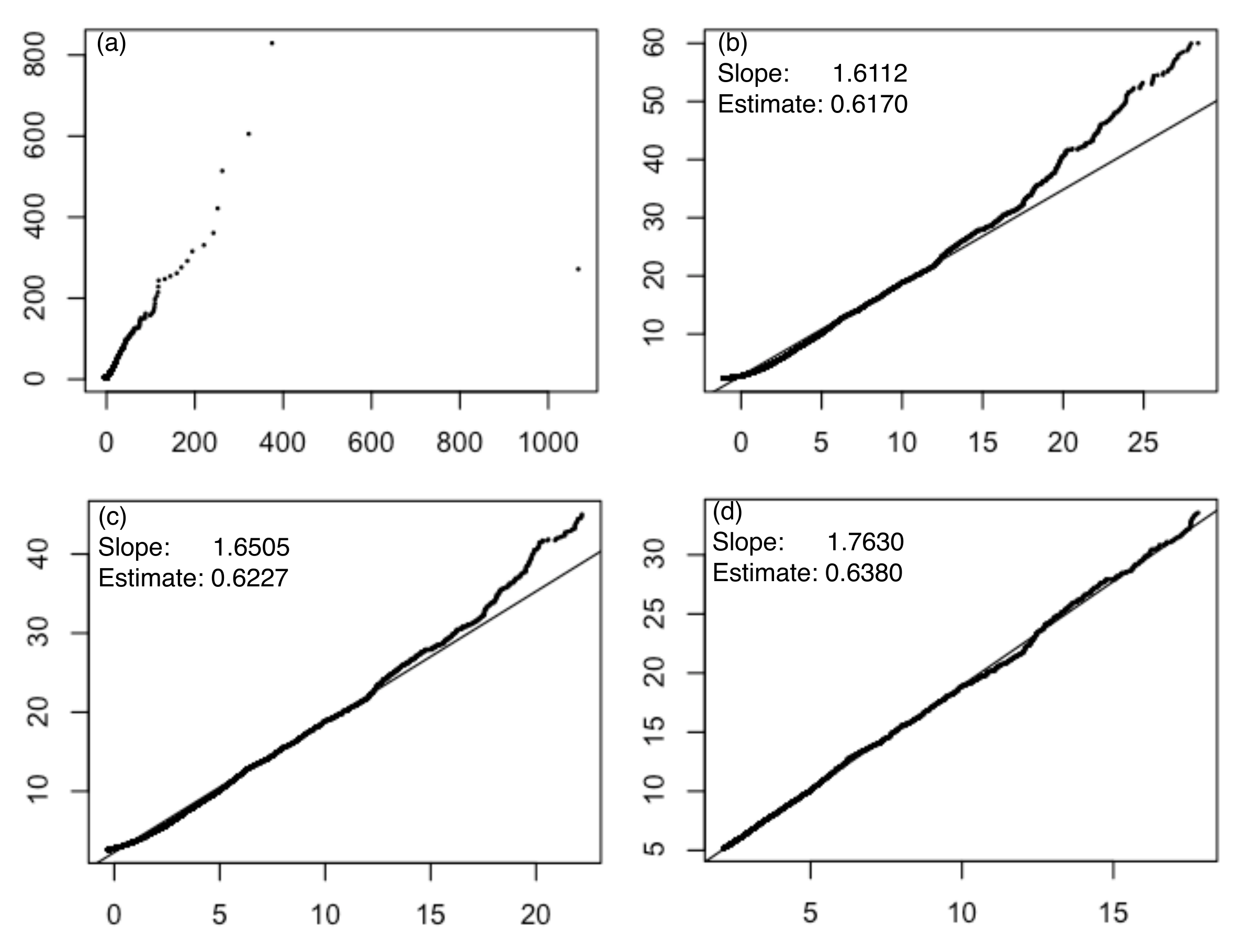}
\label{fig:stable}
\end{figure}

\subsubsection{Right-skewed stable distribution}
We next {simulate} 50000  random samples  from a totally right
skewed stable(1.5) distribution. So $ \bar F\in RV_{ -1.5}$ and then $
\xi=2/3$. Figure \ref{fig:stable}(a) is the ME plot obtained from this
data set.  This is not a sample from a GPD, but {only} from a distribution in
the maximal domain of attraction of a GPD. The  ME function is
not exactly linear and for estimating $\xi$ we should concentrate on  high
thresholds. As we did for the last example we  drop  points in the
plot for very high order statistics since they are the average of a
very few {values}. Figures \ref{fig:stable}(b), \ref{fig:stable}(c) and
\ref{fig:stable}(d) confines the plot to the order statistics
120-30000, 180-20000 and 270-10000 respectively, i.e., plots the
points $ (X_{ (i)},\hat M(X_{ (i)}))$ for $ i$ in the specified
range. As we restrict the plot more and more, the plot becomes
increasingly linear. In Figure \ref{fig:stable}(d) the least squares
estimate of the slope of the line is 1.763 and hence the estimate of $
\xi$ is 0.638.

\subsubsection{Beta distribution}Figure \ref{fig:beta} gives the ME plot for 50000 random variables from the beta(2,2) distribution
\begin{figure}[h]
\centering
\caption{ME Plot of $ 50000$ random variables from the beta(2,2) distribution $ (\xi=-0.5)$. (a) Entire plot, (b) Order statistics: 150-35000, (c) 300-20000, (d) 450-5000. }
\includegraphics[width=12.6cm]{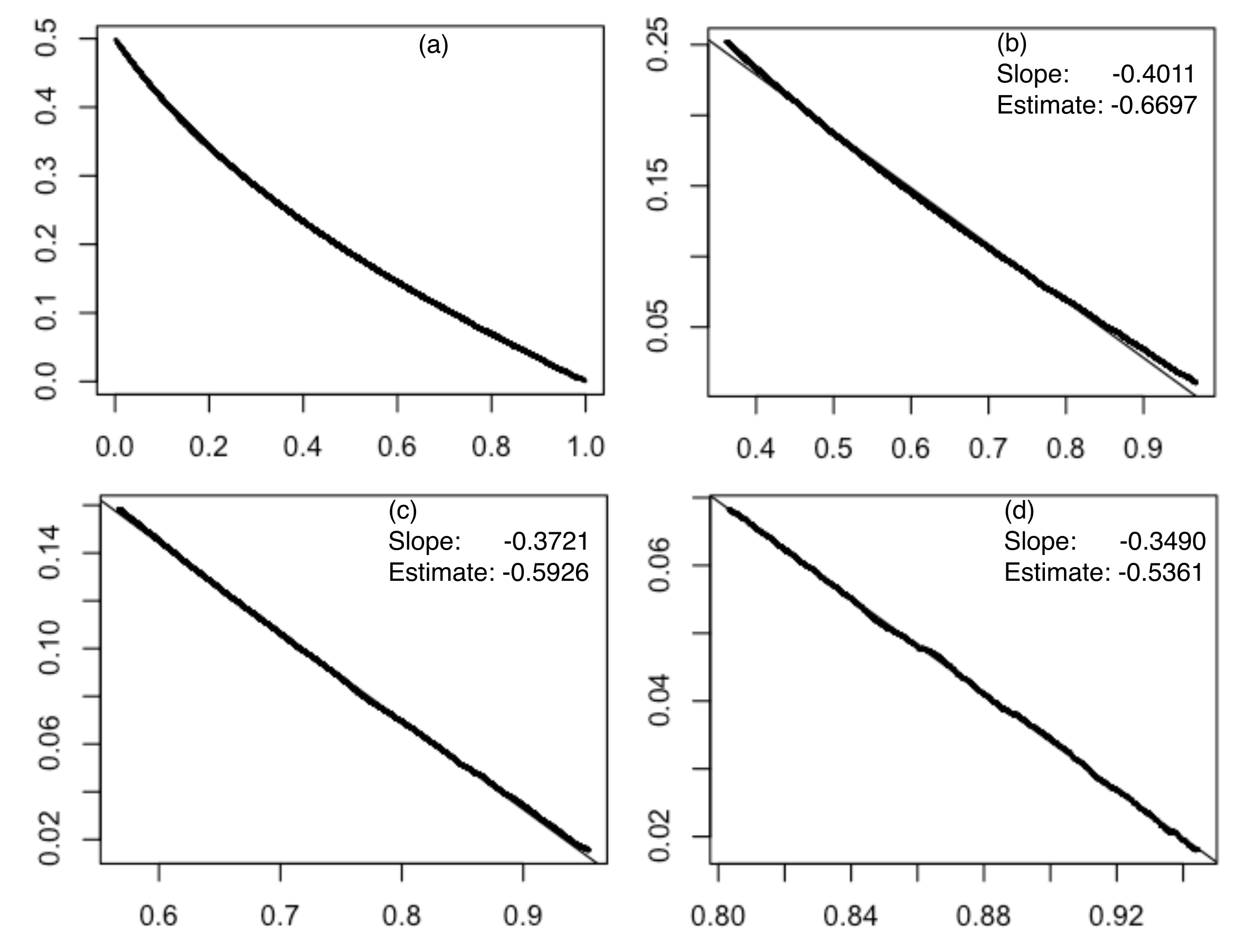}
\label{fig:beta}
\end{figure}
which is in the maximal domain of the Weibull distribution with the parameter $ \xi=-0.5$. Figure \ref{fig:beta}(a) is the entire ME plot and then Figures \ref{fig:beta}(b), \ref{fig:beta}(c) and \ref{fig:beta}(d) plot the empirical ME function for the order statistics 150-35000, 300-20000 and 450-5000 respectively. The last plot is quite linear and the estimate of $ \xi$ is $ -0.5361$.

\subsection{Difficult Cases}\label{subsub:diff}

\subsubsection{Lognormal distribution}The lognormal(0,1) distribution is in the maximal domain of attraction of the Gumbel and hence $ \xi=0$. The ME function of the log normal has the form
\[ 
 	M(u)= \frac{ u}{\ln u }(1+o(1)) \ \ \ \ \mbox{ as } u\to \infty;
\]
see Table 3.4.7 in \cite[p.161]{embrechtskluppelbergmikosch:1997}. So
$ M(u)$ is regularly varying of index $ 1$ but still $ M^{ \prime
}(u)\to 0$. Figure \ref{fig:lognormal}(a) shows the ME plot obtained
for a sample {of size} $ 10^{ 5}$  from the lognormal(0,1) distribution. 
\begin{figure}[h]
\centering
\caption{ME plot of $ 10^{ 5}$ random variables from the lognormal(0,1) distribution $ (\xi=0)$. (a) Entire plot, (b) Order statistics: 150-70000, (c) 300-40000, (d) 450-10000. }
\label{fig:lognormal}
\includegraphics[width=12.6cm]{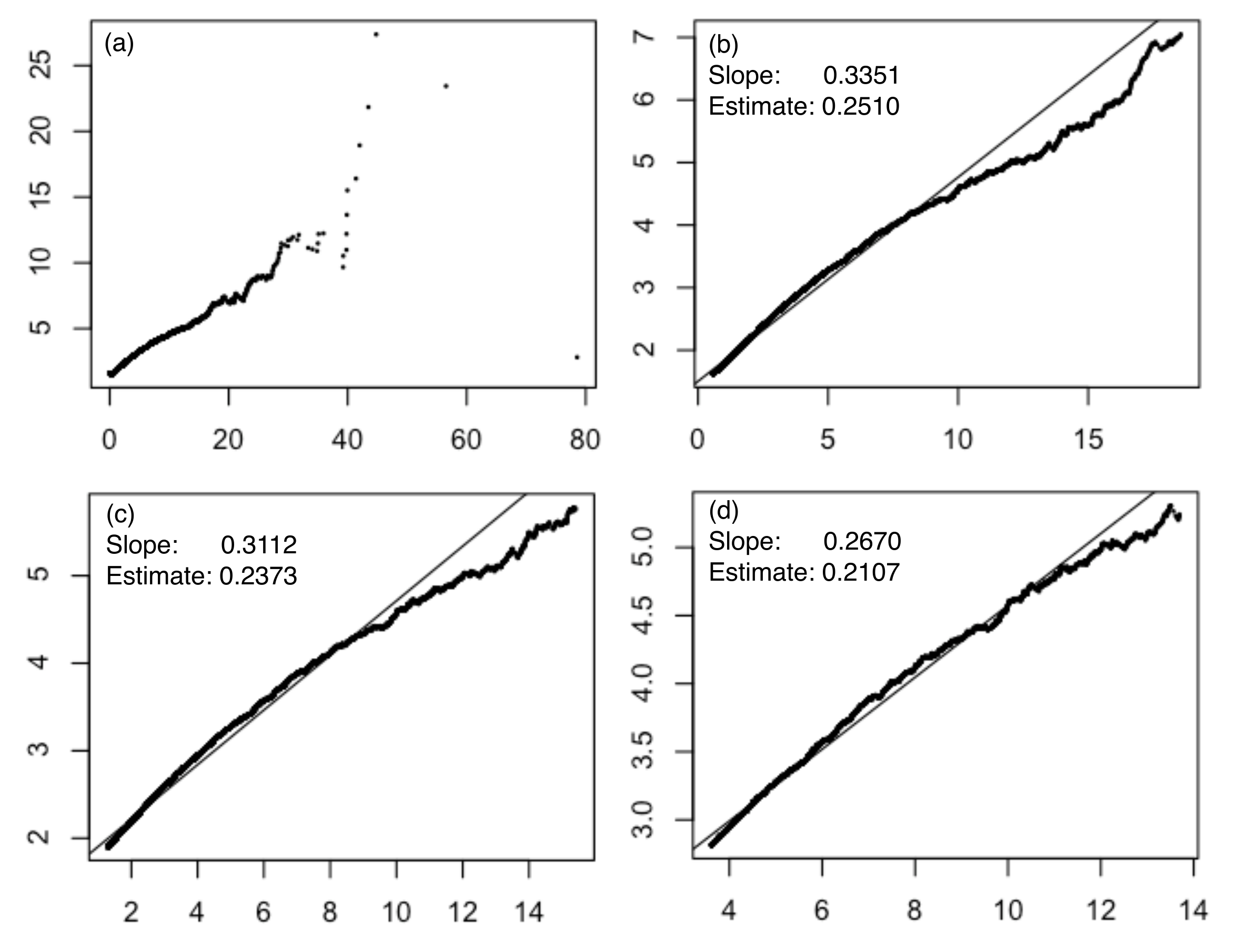}
\end{figure}
Figures \ref{fig:lognormal}(b), \ref{fig:lognormal}(c) and
\ref{fig:lognormal}(d) {show} the empirical ME functions for the order
statistics 150-70000, 300-40000 and 450-10000 respectively. The slopes
of the least squares lines in Figures \ref{fig:lognormal}(b),
\ref{fig:lognormal}(c) and \ref{fig:lognormal}(d) are   0.3351, 0.3112
and 0.267 respectively. The estimate of $ \xi$ also decreases steadily
as we zoom in towards the higher order statistics from 0.251 in
\ref{fig:lognormal}(b) to 0.2107 in \ref{fig:lognormal}(d).
Furthermore, a curve is evident in the plots and the slope of the
curve is decreasing, albeit very slowly, as we look at higher and
higher thresholds. At a first glance the ME function might seem to
resemble that of a distribution in the maximal domain of attraction of
the Frechet. The curve becomes evident only after a detailed analysis
of the plot. That is possible because 
{the data are simulated but in practice analysis would be
  difficult.}
For this example, the ME plot is not 
a very effective diagnostic for discerning the model.

\subsubsection{A non-standard distribution}We also try a non-standard distribution for which $ \bar F ^{ -1}(x)=x^{ -1/2}(1-10\ln x), 0<x\le1$. This means that $ \bar F\in RV_{ -2}$ and therefore $ \xi=0.5$. The exact form of $ \bar F$ is given by 
\begin{equation}\label{eq:nonstd} 
 	\bar F(x)=400 W\big( xe^{ 1/20}/20 \big)^{ 2} x^{ -2}    \ \ \ \ \mbox{ for all } x\ge 1,
\end{equation}
where $ W$ is the Lambert W function satisfying $ W(x)e^{ W(x)}=x$ for all $ x>0$.  Observe that $ W(x)\to \infty$ as $ x\to \infty$ and $ W(x)\le \log (x)$ for $ x>1$. Furthermore,
\[ 
 	\frac{ \log (x)}{W(x) } =1+ \frac{ \log W(x)}{W(x) } \to 1 \ \ \ \ \mbox{ as  } x\to \infty,   
\]
and hence $ W(x)$ is a slowly varying function.  This is therefore an
example where the slowly varying term contributes significantly to $
\bar F$. That was not the case in the Pareto or  the stable examples.  
\begin{figure}[h]
\centering
\caption{ME Plot of $ 10^{ 5}$ random variables from the  distribution  in \eqref{eq:nonstd}. $ (\xi=0.5)$. (a) Entire plot, (b) Order statistics: 150-70000, (c) 400-20000, (d) Hill Plot estimating $ \alpha =1/\xi$. }
\label{fig:nonstand}
\includegraphics[width=12.6cm]{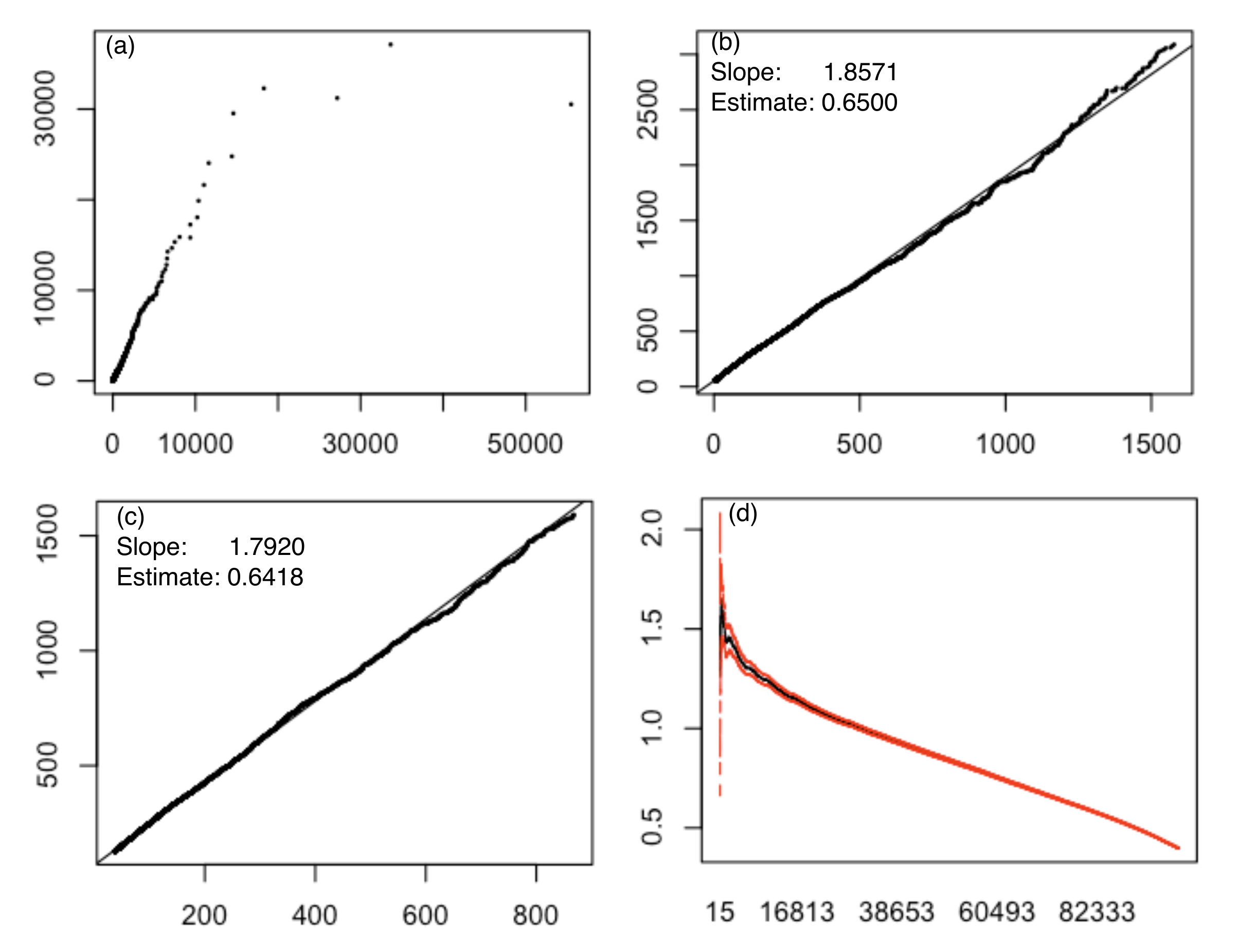}
\end{figure}
We simulated $ 10^{ 5}$ random variables from this
distribution. Figure \ref{fig:nonstand}(a) gives the entire ME plot
from this data set. Figures \ref{fig:nonstand}(b) and
\ref{fig:nonstand}(c) plots the ME function for the order statistics
150-70000 and 400-20000 respectively. In Figure \ref{fig:nonstand}(c)
the estimate of $ \xi$ is 0.6418 which 
{is a somewhat disappointing estimate given
that the sample size was  $ 10^{ 5}$}. Figure
\ref{fig:nonstand}(d) is the Hill Plot from this data set using the
\emph{QRMlib} package in R. It plots the estimate of $ \alpha =1/\xi$
obtained by choosing  different values of $ k$. It is evident from
this that the Hill estimator does not perform well here. For none of
the values of $ k$ is the Hill estimator even close to the true value
of $ \alpha $ which is 2. We conclude, {not surprisingly}, that a slowly varying function
increasing to infinity can fool both the ME plot and the Hill
plot. See
 \cite{Degen:2007p5866} for a discussion on the behavior of the ME
 plot for a sample simulated from the g-and-h distribution and
 \cite{resnick2006htp} for {\it Hill horror plots\/}.

\subsection{Infinite Mean: Pareto with $
  \xi=2$.}\label{subsub:infmean} This simulation  
\begin{figure}[h]
\centering
\caption{ME plot of 50000 random variables from Pareto(0.5) distribution $ (\xi=2)$. (a) Entire plot, (b) Order statistics 250-10000.}
\label{fig:pareto0-5}
\includegraphics[width=12.5cm]{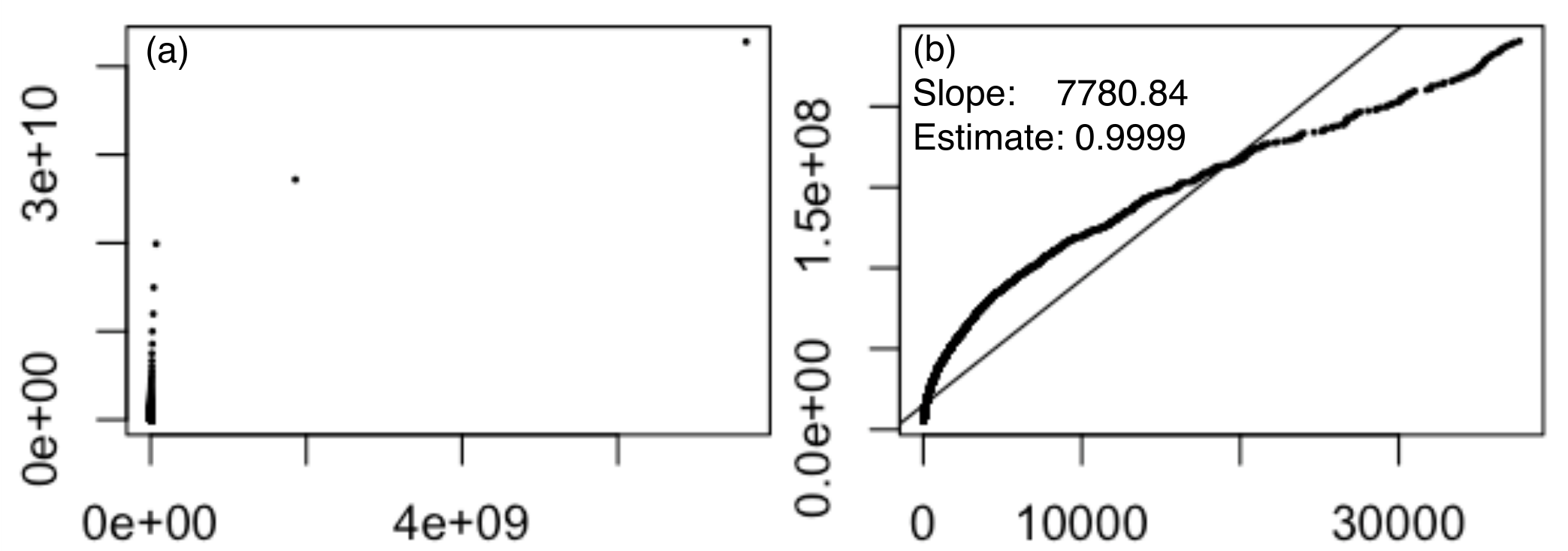}
\end{figure}
{sheds light on the behavior of the ME plot} when $ \xi>1$. In
this case the ME function does not exist but {the empirical ME
  plot does}. Figure \ref{fig:pareto0-5} 
{displays} the ME plot of for 50000 random variables simulated from
Pareto(0.5) distribution.  
The plot is certainly far from linear even for high order statistics and the least squares line has slope 7780.84 which gives an estimate of $ \xi$ to be 0.9999. This certainly gives an indication that the ME plot is not a good diagnostic in this case. 
\end{section}

\begin{section}{ME Plots for Real Data}\label{sec:data}

\subsection{Size of Internet Response} 
{This data set consists of  Internet response sizes  corresponding
to user  requests. The sizes are thresholded to be 
at least 100KB. The data set a part of a bigger set
collected in April 2000 at the University of North Carolina at Chapel
Hill. }

Figure \ref{fig:internet} contains various  plots of the data.  Figures
\ref{fig:internet}(b) and Figure \ref{fig:internet}(e) are the Hill
plot (estimating $ 1/\xi$) and the Pickands plot respectively. It is
difficult to infer anything from these plots  though superficially
they appear stable.  
\begin{figure}[h]
\centering
\caption{Internet response sizes.  (a) Scatter plot, (b) Hill Plot
  estimating $ \alpha =1/\xi$, (c) ME plot, (d) ME plot for order
  statistics 300-12500, (e) Pickands Plot for $ \xi$, (f) QQ plot with
  $ k=15000$, (g) QQ plot with $ k=5000$. } 
\includegraphics[width=14.2cm]{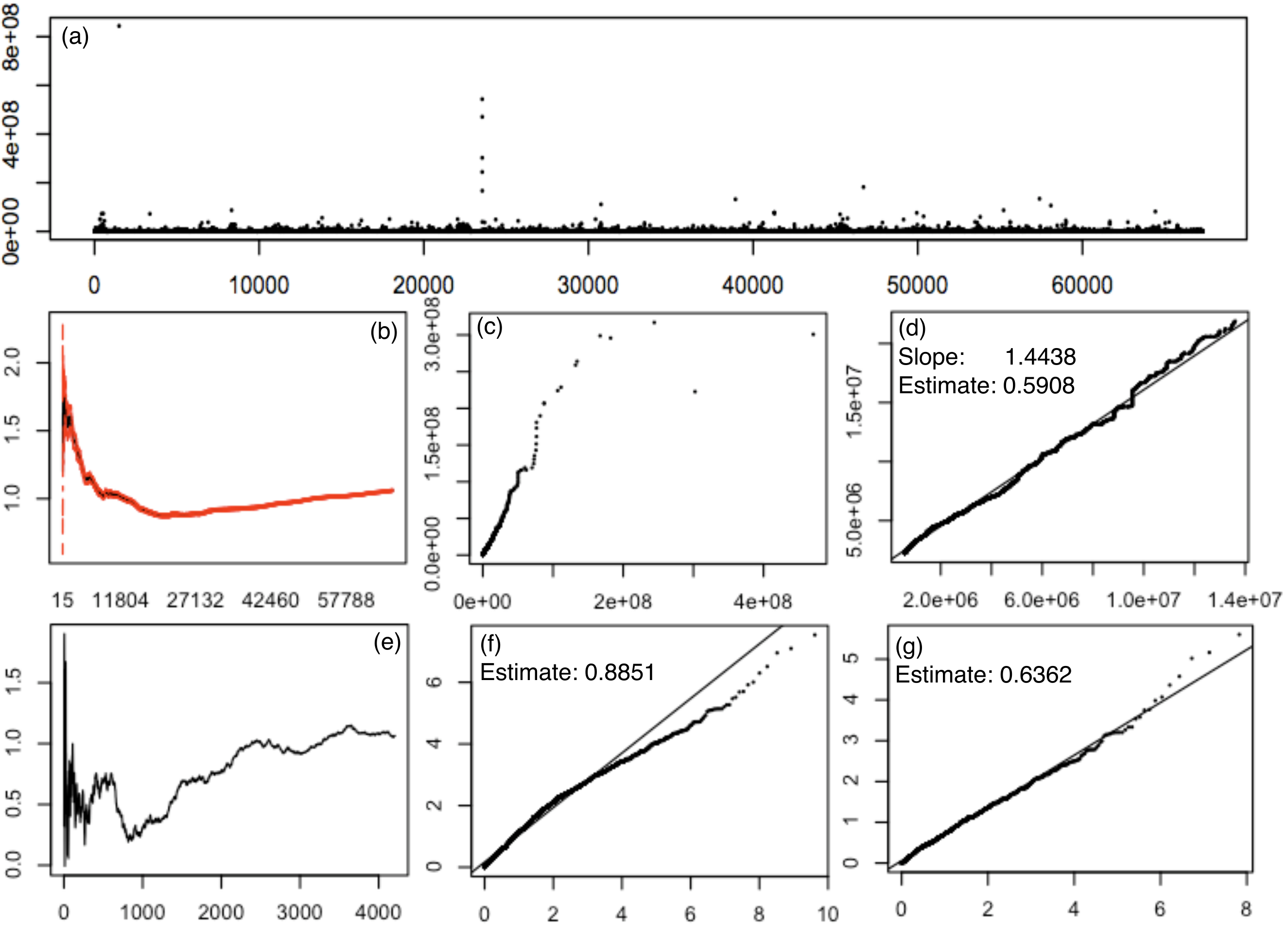}
\label{fig:internet}
\end{figure}
Figures \ref{fig:internet}(c) and \ref{fig:internet}(d) are the entire
ME plot and the ME plot restricted for order statistics 300-12500. The
second plot does seem to be very linear and gives an estimate of $
\xi$ to be 0.5908. Figures \ref{fig:internet}(f) and
\ref{fig:internet}(g) are the QQ plots for the data for $ k=15000$ and
$ k=2500$ (as explained in Section \ref{sec:exmethod}). The estimate
of $ \xi$ in these two plots are 0.8851 and 0.6362. The estimates of $
\xi$ obtained from the QQ plot \ref{fig:internet}(d) and the ME plot
\ref{fig:internet}(g) are close and the plots are also linear. So we {believe}
 that this is a reasonable estimate of $ \xi$. 

\begin{figure}[t]
\centering
\caption{Daily discharge of water in Hudson river. (a) Time series plot, (b) Homoscedasticized plot, (c) Residual plot, (d) ACF of residuals, (e) Hill plot for $ \alpha =1/\xi$, (f) ME plot, (g) ME plot for order statistics 300-1300, (h) Pickands plot, (i) QQ plot with $ k=8000$, (j) QQ plot with $ k=600$.}
\label{fig:hudson}
\includegraphics[width=14.2cm]{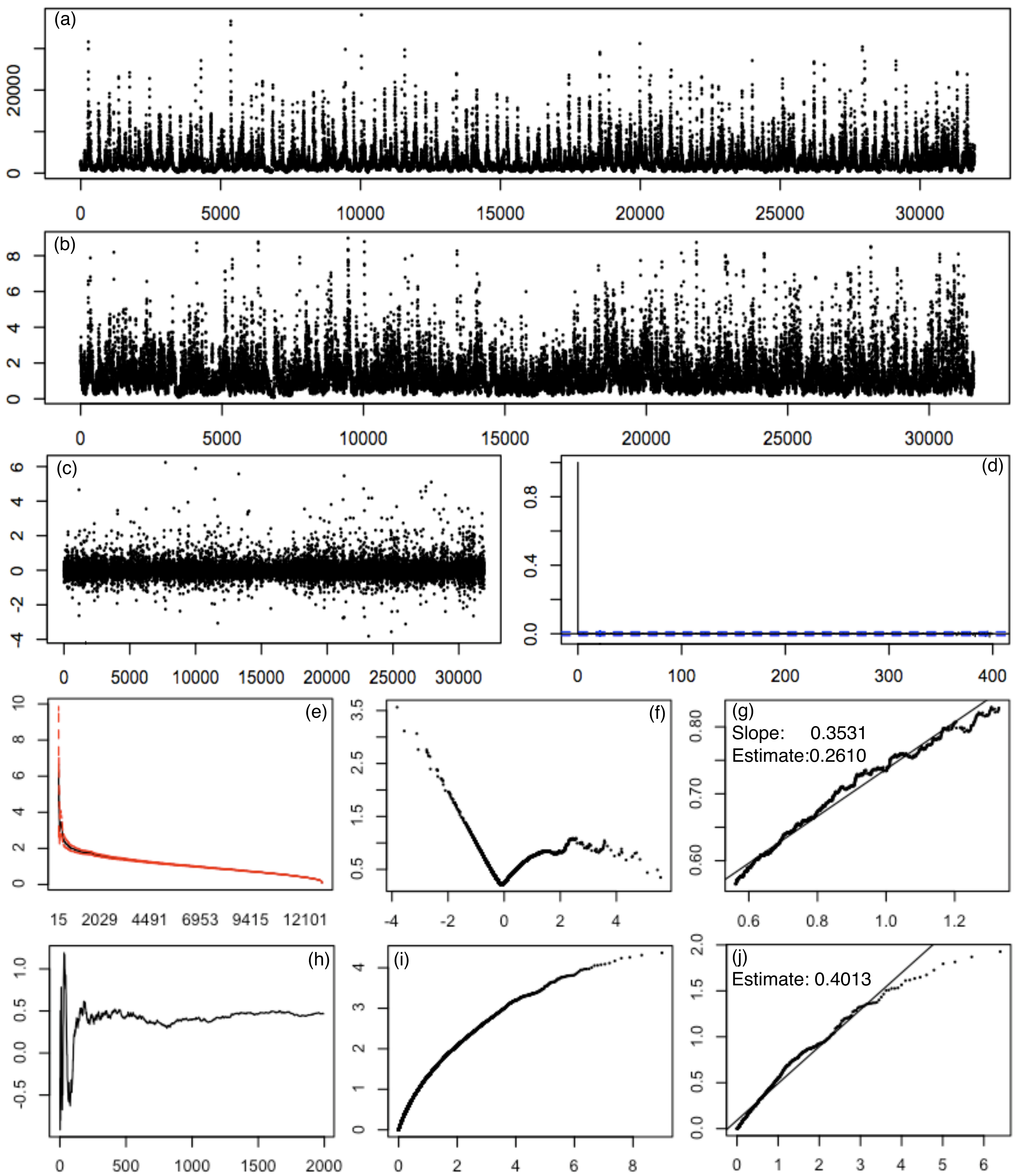}
\end{figure}

\subsection{Volume of Water in the Hudson River}\label{ssec:hudson} We
now analyze data  on the average daily discharge of water (in cubic
feet per second) in the Hudson river measured at the U.S. Geological
Survey site number 01318500 near Hadley, NY. The  range of the data is
from July 15, 1921 to December 31, 2008 for a total of 31946 data points.

\begin{figure}[t]
\centering
\caption{Ozone level in New York City. (a) Time series plot, (b) Homoscedasticized plot, (c) Residual plot, (d) ACF of residuals, (e) Hill plot for $ \alpha =1/\xi$, (f) ME plot, (g) ME plot for order statistics 300-1300, (h) Pickands plot, (i) QQ plot with $ k=4000$, (j) QQ plot with $ k=550$.}
\label{fig:ozone}
\includegraphics[width=14cm]{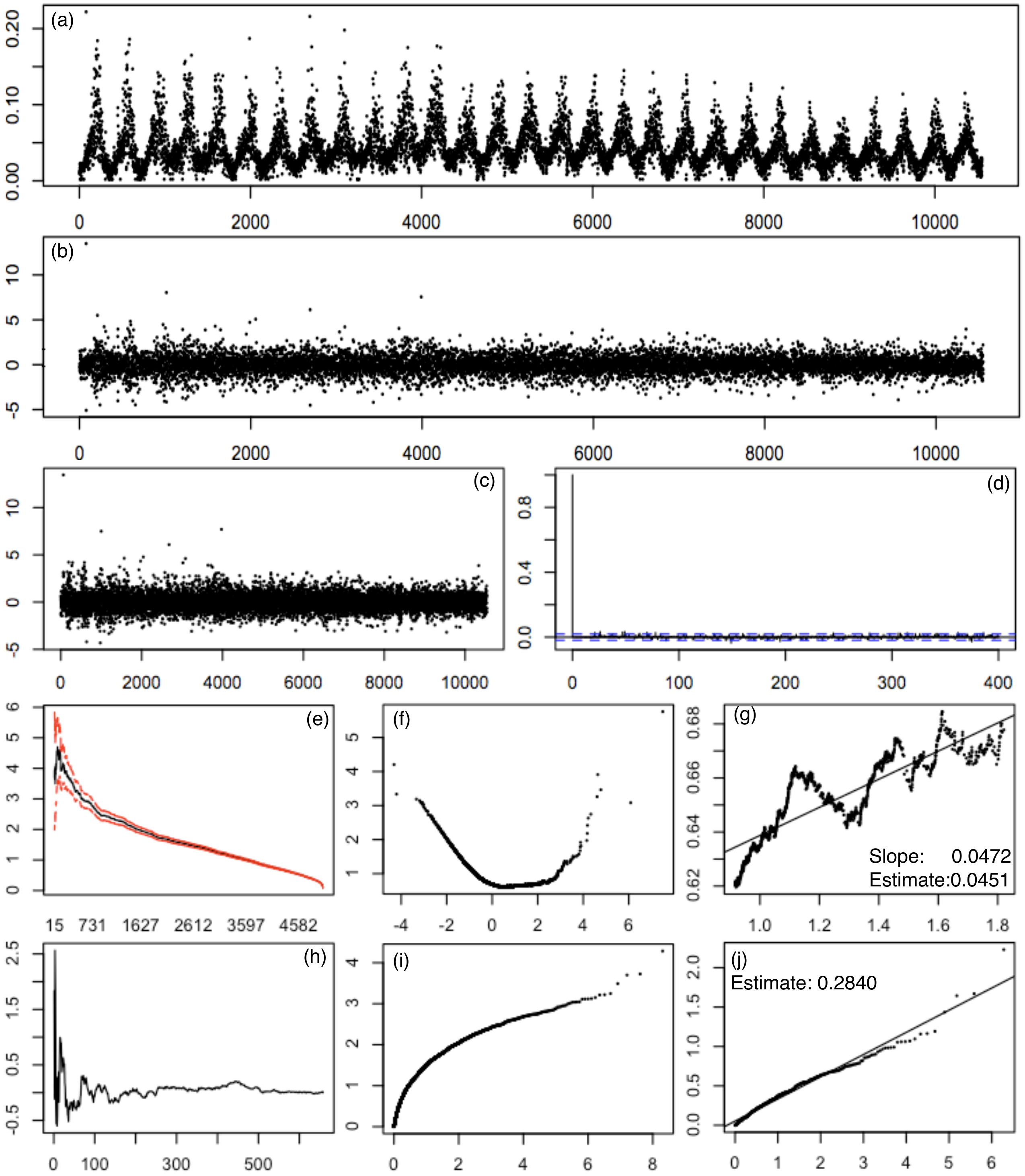}
\end{figure}

Figure \ref{fig:hudson}(a) is the time series plot of the original
data and it  shows the presence of periodicity in the data. The volume
of water is typically much higher in April and May than the rest of
the year which {possibly  is due to snow  melt.}  We
`homoscedasticize' the data in the following way. We compute the
standard deviation of the average discharge of water for every day of
the year and then divide each data point by the standard deviation
corresponding to that day. If the original data is say $ (X_{
  7/15/1921},\cdots,X_{ 12/31/2008})$ then we transform it to $ (X_{
  7/15/1921}/S_{ 7/15},\cdots,X_{ 12/31/2008}/S_{ 12/31})$, where $
S_{ 7/15}$ is the standard deviation of the data points obtained on
July 15 in the different years in the range of the data and similarly
$ S_{ 12/31}$ is the same for December 31. The plot of the transformed
points {is}  given in \ref{fig:hudson}(b). We then fit an AR(33)
model to 
this data using the function \emph{ar} in the \emph{stats} package in
R. The lag was chosen {based on the AIC criterion}. 
{Figures \ref{fig:hudson}(c) and (d) show the residuals and their ACF
plot respectively.} This encourages us to
assume that there no linear dependence in the residuals. 

 We now apply the tools for extreme value analysis on the
 residuals. Figure \ref{fig:hudson}(e) is the Hill plot and it is
 difficult to draw any inference from this plot in this case.
 Figures \ref{fig:hudson}(f) and \ref{fig:hudson}(g) are the entire ME
 plot and the ME plot restricted to the order statistics
 300-1300. From \ref{fig:hudson}(g) we get an estimate of $ \xi$ to be
 0.261. The Pickands plot in \ref{fig:hudson}(h) and the QQ plots in
 \ref{fig:hudson}(i) and \ref{fig:hudson}(j) suggest an estimate of $
 \xi$ around 0.4. A definite curve is visible in the QQ plot even for
 $ k=600$. But the slope of the least squares line fitting the QQ plot
 supports the estimate suggested by the Pickands plot and the ME
 plot. {We see that it is difficult to reach a conclusion about the range of $ \xi$. Still we infer that 0.4 is a reasonable estimate of $ \xi$ for this
 data since that is being suggested by  two different methods.}

\subsection{Ozone level in New York City} We also apply the methods to
a data set obtained from
\url{http://www.epa.gov/ttn/airs/aqsdatamart}. This is the data on
daily maxima of level of Ozone (in parts per million) in New York City
on measurements closest to the ground level  observed between January
1, 1980 and December 31, 2008.

Figure \ref{fig:ozone}(a) is the time series plot of the data. This
data set also showed a seasonal component which accounted for high
values during the summer months.  We transform the data set to a
homoscedastic series (Figure \ref{fig:ozone}(b)) using the same
technique as explained in Subsection \ref{ssec:hudson}. Fitting an
AR(16) model we get the residuals which are uncorrelated; see Figures
\ref{fig:ozone}(c) and \ref{fig:ozone}(d). 

The Hill plot in Figure \ref{fig:ozone}(e) again fails to give a
reasonable estimate of the tail index. The ME plots in Figures
\ref{fig:ozone}(e) and \ref{fig:ozone}(g) are also very rough.  Figure
\ref{fig:ozone}(g) is the plot of the points $ (X_{ (i)},\hat M(X_{
  i}))$ for $ 300\le i\le 1300$ and the least squares line fitting
these points has slope 0.0472 which gives an estimate of $ \xi$ to be
0.0451. This is consistent with the Pickands plot in
\ref{fig:ozone}(h). This suggests that the residuals may be in the
domain of attraction of the Gumbel distribution.  
\end{section}

\section{Conclusion.}
The ME plot may be used as a diagnostic to aid in tail or quantile
estimation for risk management and other extreme value problems. 
However, some problems  associated with its use certainly exist:
\begin{itemize}
\item  One needs to trim away
$\{(X_{(i)},\hat M(X_{(i)}))\}$ for small values of $i$ where too few
terms are averaged and also trim  irrelevent terms for large values of $i$ which
are governed by either the center of the distribution or the left
tail. So two discretionary cuts to the data need be made whereas for
other diagnostics only one threshold needs to be selected.
\item The analyst needs to be convinced  $\xi <1$ since for $\xi\geq
  1$ random sets are the limits for the normalized ME plot. Such
  random limits could create  misleading impressions. The Pickands and moment estimators place no such restriction on the range of $\xi$. The QQ method works most easily  when $\xi >0$ but can be extended to all $\xi \in \mathbb{R}$. The Hill method requires $\xi>0$.
\item Distributions not particularly close to GPD can fool the ME diagnostic.
However, fairness requires pointing out that  this is true of all the
procedures in the extreme value catalogue. In particular, with heavy
tail distributions, if a slowly varying factor is attached to a Pareto
tail, diagnostics typically perform poorly.
\end{itemize}

The standing assumption for the proofs in this paper is that $ \{X_{ n}\}$ is iid. 
We believe most of the results on the ME plot hold under the assumption that the 
underlying sequence $ \{X_{ n}\}$ is stationary and the tail empirical measure is consistent for the limiting GPD distribution
of the marginal distribution of $ X_{ 1}$. We intend to look into this
further. Other open issues engaging our attention include converses to
the consistency of the ME plot and if the slope of the least squares
line through the ME plot is a consistent estimator. 

We are thankful to the referees and the editors for their valuable  and detailed comments. 

\bibliographystyle{elsart-harv}
\bibliography{/Users/Souvik/Documents/Chronicles/Miscellanea/Latex/bibfile}
\end{document}